\numberwithin{equation}{section}
\newtheorem{theorem}{Theorem}[section]
\newtheorem{definition}[theorem]{Definition}
\newtheorem{algorithm}[theorem]{Algorithm}
\newtheorem{lemma}[theorem]{Lemma}
\newtheorem{corollary}[theorem]{Corollary}
\newtheorem{proposition}[theorem]{Proposition}
\newtheorem{remark}[theorem]{Remark}
\newtheorem{assumption}[theorem]{Assumption}
\newcommand{\ii}{i}
\newcommand{\im}{\operatorname{Im}}
\newcommand{\re}{\operatorname{Re}}
\newcommand{\Res}{\operatorname{Res}}
\newcommand{\ma}{\begin{pmatrix}}
	\newcommand{\am}{\end{pmatrix}}
\newcommand{\qqq}{\qquad \qquad}
\newcommand{\qq}{\quad \quad}
\def\supp{\mathop{\rm supp}\nolimits}
\def\Im{{\rm Im\,}}
\begin{document}

	\title{Inverse spectral Love problem via Weyl-Titchmarsh function}
	\author{Samuele Sottile\footnotemark[1]\;\,\footnotemark[2]}
	\renewcommand{\thefootnote}{\fnsymbol{footnote}}
		\footnotetext[1]{Centre for Mathematical Sciences, Lund University, 221 00 Lund, Sweden; \tt\href{mailto:samuele.sottile@math.lu.se}{samuele.sottile@math.lu.se}}
			\footnotetext[2]{Department of Materials Science and Applied Mathematics, Malm\"{o} University, SE-205 06 Malm\"{o}, Sweden; \tt\href{mailto:samuele.sottile@mau.se}{samuele.sottile@mau.se}}
	
	\maketitle
	
	\begin{abstract}
		In this paper we prove an inverse resonance theorem for the half-solid with vanishing stresses on the surface via Weyl-Titchmarsh function. Using a semi-classical approach it is possible to simplify this three-dimensional problem of the elastic wave equation for the half-solid as a Schr{\"o}dinger equation with Robin boundary conditions on the half-line. The goal of the paper is to establish a method to recover the potential from the Weyl-Titchmarsh function for non self-adjoint problems and to establish a one-to-one and onto map between suitable function spaces.
Moreover, we produce an algorithm in order to retrieve the shear modulus from the eigenvalues and resonances, via the spectral data.\\[4mm]
		\textit{\textbf{MSC classification:} 35R30, 35Q86, 34A55, 34L25, 81U40, 74J25} \\[1mm]
\textit{\textbf{Keywords:} inverse problems, resonances, Sturm-Liouville problem, Love surface waves, Weyl-Titchmarsh function, spectral problem.}
	\end{abstract}

%
%
%
%
%
%
%
%
%
%




\vskip 0.25cm

\section{Introduction.}\label{intro}

\setcounter{equation}{0}
\subsection{Inverse resonance problems, previous results.}

The Love boundary value problem for the vertically inhomogeneous elastic isotropic medium in the half-space (see \cite{4authorsPaper, Sottile})
\begin{equation}\label{love}
-\frac{\partial}{\partial Z} \hat{\mu} \frac{\partial \varphi_2}{\partial Z} + \hat{\mu} \, |\xi|^2 \varphi_2
= \omega^2 \varphi_2 ,
\end{equation}
\begin{equation}\label{love_b}
\frac{\partial \varphi_2}{\partial Z}(0) = 0,
\end{equation}
where $Z \in \left( - \infty, 0 \right]$ is the coordinate with direction normal to the boundary, $\hat{\mu}$ is the density-normalized shear modulus, $\xi$ is the dual of the coordinate vector $(x,y)$ parallel to the boundary, $\omega$ is the frequency and $\varphi_2$ is the component of the displacement vector on the $y$ direction. Equation \eqref{love} describes the motion of an infinitesimal element of elastic solid on a direction lying on a plane parallel to the Earth's surface. The boundary condition \eqref{love_b} says that the infinitesimal element has zero normal velocity on the Earth's surface, which is line with the fact that Love waves are transverse waves. Equations \eqref{love}-\eqref{love_b} are obtained in \cite{4authorsPaper} after decoupling the elastic wave equation for infinitesimal solid and using the semiclassical limit. A change of variable (see Section \ref{Semiclass descr}) in equations \eqref{love}--\eqref{love_b} leads to a Schr{\"o}dinger equation 
\begin{equation}
-u''+Vu= k^2 u, 
\end{equation}
with Robin boundary condition 
\begin{equation}
u'(0) + h u(0)= 0.
\end{equation}
Equations \eqref{love}--\eqref{love_b} describe the equation of motion of Love seismic surface waves. Surface waves are waves who travel close to the Earth's surface with amplitude decaying exponentially with the distance to the Earth's surface. Love waves are a type of seismic surface waves (the other type is the Rayleigh waves) which are transverse, in the sense that the oscillation of an infinitesimal element of elastic solid is perpendicular to the direction of propagation of the wave. The boundary condition \eqref{love_b} is in line with the Love waves being transverse waves, hence not implying oscillations on the vertical axis.

A first complete result on the inverse resonance problem for Love seismic surface waves was obtained in \cite{Sottile}. In \cite{Sottile} the author reconstruct uniquely the potential of a Schr{\"o}dinger equation with Robin boundary condition from eigenvalues and resonances, partly using previous results of Marchenko \cite{Marchenko}, employing a different class of potential and different boundary condition, and Korotyaev \cite{Korotyaev}, employing the same class of potential but different boundary condition. In this paper, we propose an alternative way of reconstructing the potential (and shear modulus) via Weyl-Titchmarsh function formalism, that can be generalized to non self-adjoint problems. This approach is inspired by the paper \cite{Bealsetal1995}, which uses a similar approach for the Rayleigh problem, consisting on a non self-adjoint  Schr{\"o}dinger (matrix-valued) equation with Robin boundary condition. The same is reproduced later by \cite{Iantchenko} for the same Rayleigh system as in \cite{Bealsetal1995}. Some other remarkable paper about surface (Rayleigh) waves are the paper from Pekeris (see \cite{Pekeris}), who obtained uniqueness of density and Lame parameters through the knowledge of the displacements at the boundary and under the hypothesis of them being real analytic; or a paper from Markushevich (see \cite{Markushevich1992}) that obtained uniqueness of the reconstruction of the smooth potential through known boundary values of solutions of the problem for two given independent sources. In \cite{Iantchenko_Rayleigh} they find direct result on location of the resonances for the Rayleigh system.

In relation to seismology, solving the inverse Love problem means reconstructing the parameters that determine the elasticity of the medium in the interior of the Earth from measurements performed on the boundary of the Earth's surface, which are, for example, the frequencies or the wave numbers of surface waves (eigenvalues and resonances). The Earth is a compact domain, but, for simplification, we consider it as a flat half space $\mathbb{R}^2 \times \left(-\infty, 0\right]$.

 There are only a few examples of complete characterizations of inverse resonance problems, for instance by Korotyaev (see \cite{Korotyaev}), who solved it on the half-line for compactly supported potentials with Dirichlet boundary condition, or Christiansen, who solved it on the whole line for step-like potentials (see \cite{Christiansen2005}), using some results from an earlier paper of Cohen--Kappeler (\cite{CohenKappeler1985}). Some other examples are \cite{Borthwick, KorotyaevRotation, KorotyaevDirac}. In \cite{Gesztesy} is shown a characterization result for real integrable potentials in the Schr{\"o}dinger operator on the half line using the Krein spectral shift function, which is connected to the scattering phase $\delta(k)$ through the Birman-Krein formula. In \cite{IantchenkoJacobi} asymptotical values of resonances are obtained for the periodic Jacobi operator with finitely supported perturbations. There are some other examples of inverse problems in Seismology. For example, in \cite{Iantchenko} they analyze an inverse spectral problem for the semiclassical Rayleigh operator starting with spectral data being the Weyl-Titchmarsh matrix.


\subsection{Semiclassical description of the Love waves system.}\label{Semiclass descr}

Starting from equations \eqref{love}--\eqref{love_b} we perform a transformation so that the resulting boundary value problem assumes a Schr{\"o}dinger-type form. Once we have performed the calibration transform, we need to solve an inverse resonance  Schr{\"o}dinger problem with Robin boundary condition, where the eigenvalues and resonances are the poles of the resolvent with respect to the parameter $k$, respectively for $\im k>0$ and $\im k \leq0$.

The main goal of this paper is to retrieve the shear modulus $\hat{\mu}$ ($\hat{\mu}=\mu/\rho$, with $\rho$ being the density)  as we explain in Remark \ref{Remark shear modulus}. This is obtained by an application of a characterization (see Theorem \ref{Theorem Inverse spectral Love problem}) between a class $\mathbb{M}_{x_I}$ of Weyl functions (see Definition \ref{Class of Weyl function}) and a class $\mathbb{V}^1_{x_I}$ of potentials (see Definition \ref{Class of potentials V' in L1}).

We make a simplifying assumption in the following.
\begin{assumption}[Homogeneity]\label{Homogeneity assumption}
	We assume that below a certain depth $Z_I$, the medium is homogeneous, so the shear modulus is constant
	\begin{equation}\label{homogeneity in depth}
	\hat{\mu} \left(Z\right) = \hat{\mu} \left(Z_I \right):=\hat{\mu}_I  \qquad \text{for  } Z \leq Z_I.
	\end{equation}
\end{assumption}

We perform the calibration substitution in \eqref{love}--\eqref{love_b} as
\begin{equation*}
\varphi_2=\frac{1}{\sqrt{\hat{\mu}}} u,\quad  \frac{d}{dZ} \left( \hat{\mu} \frac{d}{dZ} \varphi_2 \right)= \frac{1}{4} \hat{\mu}^{-\frac{3}{2}} (\hat{\mu}')^2u-\frac{1}{2} \hat{\mu}^{-\frac{1}{2}}\hat{\mu}'' u+\hat{\mu}^\frac{1}{2}u''
\end{equation*}
and we get
\begin{equation*}
u''-|\xi|^2u=\left[\frac12\frac{\hat{\mu}''}{\hat{\mu}}-\frac14  \left( \frac{\hat{\mu}'}{\hat{\mu}} \right)^2 -\frac{1}{\hat{\mu}}\omega^2\right]u. 
\end{equation*}
We set the quasi momentum $k:=\sqrt{\frac{\omega^2}{\hat{\mu}_I}-|\xi|^2}$ and  
\begin{equation}\label{Definition of the potential}
V=\frac{1}{2}\frac{\hat{\mu}''}{\hat{\mu}}-\frac{1}{4}  \left( \frac{\hat{\mu}'}{\hat{\mu}} \right)^2 -\frac{1}{\hat{\mu}}\omega^2+\frac{1}{\hat{\mu}_I}\omega^2= \frac{(\sqrt{\hat{\mu}})''}{ \sqrt{\hat{\mu}}}-\frac{1}{\hat{\mu}}\omega^2+\frac{1}{\hat{\mu}_I}\omega^2,
\end{equation}
where $\hat{\mu}_I:=\hat{\mu}(Z_I)$ is the value of the shear modulus at the depth $Z_I$, below which the medium is homogeneous. By Assumption \ref{Homogeneity assumption}, $\hat{\mu} (Z) = \hat{\mu}_I $ constant for $Z \leq Z_I$, hence also the derivatives $\hat{\mu}'$ and $\hat{\mu}''$ vanish for $Z \leq Z_I$. This implies that the potential $V$ has compact support and depends only on $Z$ as we fixed $\omega$ and let our spectral parameter $\xi$ vary. In this way the potential $V= V_{\omega}$ can be parametrized by $\omega$ and we let $\xi$ vary.
\begin{remark}
We will assume that the potential $V \in \mathbb{V}^1_{x_I}$ (Definition \ref{Class of potentials V' in L1}), that implies the shear modulus $\hat{\mu}$ to be constant below the depth $Z_I$ and to be different than $\hat{\mu}_I$ in an interval of type $\left(Z_I, a+Z_I\right)$ for $a>0$.
\end{remark}

The Love scalar equation takes the following form:
\begin{equation}\label{Schrod equation}
-u''+Vu= \lambda u, \qquad \lambda=k^2,
\end{equation}
with corresponding boundary condition that becomes of Robin type after the transformation
\begin{equation}\label{robin bc}
u'(0) + h u(0)= 0, \qquad h= - \frac{1}{2} \frac{\hat{\mu}'(0)}{\hat{\mu}(0)}.
\end{equation}
To resemble the classical formulation, we make the substitution $Z=-x$, which leads the domain to become $\left[ 0 , +\infty \right)$ and we study the problem in terms of $k$. In our case, the potential of the Schr{\"o}dinger operator is real because we are considering an elastic medium. In the case of an inelastic medium, we would have a complex potential that  implies the loss of part of the energy which is converted into heat. We make a self-adjoint realization in $L^2(\mathbb{R}_+)$ of the operator in \eqref{Schrod equation} due to the boundary condition (see \cite{Chadan}). Then the operator appearing on the left hand side of \eqref{Schrod equation}  prescribed with the domain
\begin{align}\label{domain of self adjoint operator}
D=\{ u \in H^2\left[0, + \infty\right) : Vu \in L^2\left[0,+\infty\right), u'(0) + h u(0) = 0\}
\end{align}
and the $L^2$ inner product is self-adjoint.

\subsection{Preliminary results}
In this subsection we list some preliminary results which are needed throughout the whole paper (see \cite{Sottile}).
First, we define  the class $\mathbb{V}^1_{x_I}$, to which the potential belongs throughout the whole paper.
\begin{definition}\label{Class of potentials V' in L1}
	We denote by $\mathbb{V}^1_{x_I}$ the class of real potentials $V$ such that $V,V' \in  L^1(\mathbb{R}_+)$, $\supp V \subset \left[0,x_I\right]$ for some $x_I >0$ and for each $\epsilon >0$ the set $\left(x_I-\epsilon, x_I\right) \cap \supp V$ has positive Lebesgue measure. 
\end{definition}
Below, we define the solution to \eqref{Schrod equation} satisfying the radiation condition.
	\begin{definition}[Jost solution]\label{Jost solution definition}
	The \textbf{Jost solutions} $f^\pm$ are the unique solutions to the differential equation \eqref{Schrod equation} that satisfy the following condition
	\begin{equation}\label{Jost solution condition}
	f^\pm(x,k)=e^{\pm ik x}\qquad \mbox{for} \;\; x>x_I.
	\end{equation}
\end{definition}
The Jost solution satisfies the Volterra-type equation
\begin{equation}\label{Volterra-type eq for f}
f(x,t) = e^{\ii k x} - \int_{x}^{\infty} \frac{\sin \left[ k(x-t) \right]}{k} V(t) f(t,k) dt.
\end{equation}
It is a known that the spectrum for the operator \eqref{Schrod equation} with domain \eqref{domain of self adjoint operator} consists of a finite number of purely imaginary and simple eigenvalues in $k$, there are no real eigenvalues and the eigenfunctions are real (see \cite{ levitaninverse, Sottile}).

From the values of the Jost solution at the boundary we can define the Jost function as follows.

\begin{definition}[Jost function]\label{Jost function definition}
	We define the \textbf{Jost function} $f_h(k)$ of the Schr{\"o}dinger operator $-\frac{d^2}{dx^2} + V$ in \eqref{Schrod equation} with Robin boundary condition \eqref{robin bc} as the quantity
	\begin{equation}\label{Jost function}
		f_h(k) = f(0,k) h + f'(0,k)
	\end{equation}
	where $f(0,k)$ is the Jost solution evaluated at $x=0$.
\end{definition}
Below we define the regular solution (see \cite[Section 1.2]{Chadan}).
\begin{definition}[Regular solution]\label{regular solution varphi}
	We define the \textit{regular solution} $\varphi$ of the Cauchy problem \eqref{Schrod equation} with Robin boundary condition \eqref{robin bc} as the quantity
	\begin{align}\label{equation for def of regular solution}
	\varphi(x,k) = - \frac{1}{2ik} \left[\overline{f_h(k)}f(x,k) - f_h(k) \overline{f(x,k)}\right].
	\end{align}
\end{definition}

We define the eigenvalues as the zeros of the Jost function $f_h$ in the physical sheet $\im k>0$ and we denote as (scattering) resonances the zeros of $f_h$ in the unphysical sheet $\im k<0$. Eigenvalues lead to a $L^2$ solution of the differential equation, while resonances lead to a non $L^2$ solution. We jointly enumerate the zeros of $f_h$, which are eigenvalues and resonances, as $\left(k_j\right)_{j \in \mathbb{N}}$, where $k_1, \cdots,k_N$ are the eigenvalues.

We recall a result of analyticity of Jost solution and Jost function for potential in the class $\mathbb{V}^1_{x_I}$ and some estimates of those. For more details and proofs see \cite{Sottile}
\begin{theorem}\label{Analyticity and continuity of Jost function and solution}
	For each fixed $x\geq0$, the Jost solution $f(x,k)$ and the Jost function $f_h(k)$ are entire in $k$.
\end{theorem}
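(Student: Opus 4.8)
The plan is to exploit the Volterra-type integral equation \eqref{Volterra-type eq for f} for the Jost solution and solve it by successive approximations (the Neumann/iteration series), then show that the resulting series converges locally uniformly in $k \in \mathbb{C}$ for each fixed $x\geq 0$, so that each partial sum is entire in $k$ and the limit is entire as well by the Weierstrass theorem on locally uniform limits of holomorphic functions. First I would set up the iteration: define $f_0(x,k) = e^{ikx}$ and
\begin{equation*}
f_{n+1}(x,k) = -\int_x^\infty \frac{\sin[k(x-t)]}{k}\, V(t)\, f_n(t,k)\, dt,
\end{equation*}
so that formally $f(x,k) = \sum_{n\geq 0} f_n(x,k)$. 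The key elementary estimate is that for all $k\in\mathbb{C}$ and $x\leq t$ one has $\left| \frac{\sin[k(x-t)]}{k} \right| \leq C(t-x)\, e^{|\im k|(t-x)}$ (with the convention that the factor $(t-x)$ absorbs the $k\to 0$ singularity), and more usefully, writing $f(x,k) = e^{ikx} m(x,k)$, the function $m$ satisfies a Volterra equation whose kernel $\frac{1 - e^{2ik(x-t)}}{2ik}$ is bounded by $(t-x)$ uniformly on $\im k \geq -A$ for any fixed $A$; since $\supp V\subset[0,x_I]$ the integration is in fact over the compact interval $[\max(x,0), x_I]$, which kills all exponential growth issues.

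Next I would run the standard induction to bound the $n$-th iterate. Setting $\sigma(x) = \int_x^{x_I} |V(t)|\,dt$ (finite because $V\in L^1$), one gets, on any set $\{\im k \geq -A\}$ and for $x$ in any bounded set, an estimate of the shape
\begin{equation*}
|f_n(x,k)| \leq \frac{C^n\, \bigl(\sigma(x)\bigr)^n}{n!}\, e^{(|\im k|+A)(x_I - x)}
\end{equation*}
for a constant $C$ depending only on $A$ and $x_I$. This is the classical Marchenko-type bound; the factorial denominator comes from the ordered-simplex volume in the iterated integral. Consequently $\sum_n f_n$ converges absolutely and uniformly on compact subsets of $\mathbb{C}$ (in $k$) and on bounded sets in $x$. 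Each $f_n(x,k)$ is entire in $k$ — this follows inductively, since $f_0$ is entire, the kernel $\frac{\sin[k(x-t)]}{k}$ is entire in $k$ (removable singularity at $k=0$), and the integral of an entire-in-$k$ integrand over a compact $t$-interval with an $L^1$ weight $V(t)$ is entire in $k$ by Morera's theorem and Fubini. Hence $f(x,k)$, being a locally uniform limit of entire functions, is entire in $k$ for each fixed $x\geq 0$.

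Finally, for the Jost function $f_h(k) = h\,f(0,k) + f'(0,k)$, I would note that differentiating \eqref{Volterra-type eq for f} in $x$ gives another Volterra representation for $f'(x,k)$ with kernel $-\cos[k(x-t)]V(t)$, to which the same iteration argument applies, yielding that $f'(0,k)$ is entire in $k$; then $f_h$ is a linear combination of two entire functions, hence entire. The main obstacle — really the only subtlety — is making the convergence of the iteration series uniform in $k$ over unbounded regions near the real axis and down into the lower half-plane: on $\{\im k \leq 0\}$ the factor $e^{|\im k|(x_I-x)}$ grows, but since we only need \emph{local} uniform convergence in $k$ (i.e. on compact $k$-sets) to conclude analyticity, this growth is harmless; one just has to state the estimates carefully enough that the $k$-dependence of the bounds is continuous and finite on compacta. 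The compact support of $V$ is what makes everything clean, since it turns all the improper integrals into integrals over $[0,x_I]$.
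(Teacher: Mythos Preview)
Your proposal is correct and follows the classical Marchenko--Levitan iteration argument. Note, however, that the paper does not actually supply a proof of this theorem: it is stated as a preliminary result with the remark ``For more details and proofs see \cite{Sottile}'', so there is no in-paper proof to compare against. The argument you sketch---solving the Volterra equation \eqref{Volterra-type eq for f} by successive approximations, obtaining the factorial bound $|f_n(x,k)| \leq \frac{C^n \sigma(x)^n}{n!}\,e^{(|\im k|)(x_I-x)}$ from the ordered-simplex volume, invoking Morera/Fubini for analyticity of each iterate, and concluding by Weierstrass---is precisely the standard proof one finds in the literature (e.g.\ Marchenko's monograph) and is almost certainly what the cited reference \cite{Sottile} contains. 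Your handling of $f'(0,k)$ via the differentiated Volterra equation with kernel $-\cos[k(x-t)]V(t)$ is likewise standard and correct, and your observation that compact support of $V$ reduces all integrals to $[x,x_I]$ and thereby tames the exponential growth in the lower half-plane is exactly the point that makes the \emph{entire} (rather than merely upper-half-plane analytic) conclusion go through.
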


\begin{lemma}[Uniform bounds on Jost function]\label{uniform bounds on Jost function}
	Let $V \in \mathbb{V}^1_{x_I}$. Then the Jost function is  of exponential type and satisfies the following estimates:
	\begin{align}
	|f(0,k) - 1| &\leq e^{(|\im k| - \im k)x_I}  a e^{a} \label{estimates on Jost solution} \\
	\left|f(0,k)  - 1 + \frac{\hat{V}(0) - \hat{V}(k) }{2ik} \right| &\leq \frac{a^2}{2} e^{(|\im k| - \im k)x_I} e^{a} \label{estimates on Jost solution 2} \\
	\left|f_h(k)- ik \right| &\leq  \left\| V \right\|  e^{(|\im k| - \im k)x_I}  e^{a} \label{bound first iteration Jost function} \\
	\left|f_h(k) -ik - h + \frac{\hat{V}(0) + \hat{V}(k) }{2}    \right|  &\leq  \left[ |h|  + \frac{\left\| V \right\| }{2}  \right] a    e^{(|\im k| - \im k)x_I} e^{a} \label{second iteration Jost function}
	\end{align}
	where $\hat{V}(k) = \int_0^{x_I} e^{2ikt} V(t) dt$ is the Fourier transform of the potential $V$ and $a =\frac{ ||V||}{\max (1,|k|)}$, with $|| V || := \int_{\mathbb{R}} |V(x)| dx$.
\end{lemma}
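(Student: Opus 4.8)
The plan is to derive all four estimates from the Volterra representation \eqref{Volterra-type eq for f} by iterating it into a Neumann series and bounding the series term by term, after a change of unknown that makes the relevant integral kernel uniformly controlled on $[0,x_I]$. First I would pass to the normalised Jost solution $m(x,k):=e^{-\ii k x}f(x,k)$, which by \eqref{Volterra-type eq for f} solves
\[
m(x,k)=1+\int_x^{\infty} D_k(t-x)\,V(t)\,m(t,k)\,dt,\qquad D_k(s):=\int_0^s e^{2\ii k u}\,du=\frac{e^{2\ii k s}-1}{2\ii k}.
\]
The advantage of this substitution is the elementary bound $|D_k(s)|\le \dfrac{e^{(|\im k|-\im k)s}}{\max(1,|k|)}$ for $s\in[0,x_I]$, which, unlike a direct estimate of the original kernel $\sin[k(x-t)]/k$, is uniform over the whole $k$-plane.

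Next I would set $m=\sum_{n\ge 0}m_n$ with $m_0\equiv 1$ and $m_{n+1}(x,k)=\int_x^{x_I}D_k(t-x)\,V(t)\,m_n(t,k)\,dt$ (the integral being effectively over $[x,x_I]$ by the support condition), and prove by induction on $n$ that
\[
|m_n(x,k)|\le \frac{e^{(|\im k|-\im k)(x_I-x)}}{n!}\left(\frac{1}{\max(1,|k|)}\int_x^{x_I}|V(s)|\,ds\right)^{\!n},\qquad x\in[0,x_I].
\]
The two algebraic ingredients here are that the exponential weights telescope, $e^{(|\im k|-\im k)(t-x)}\,e^{(|\im k|-\im k)(x_I-t)}=e^{(|\im k|-\im k)(x_I-x)}$, and the Volterra identity $\int_x^{x_I}|V(t)|\big(\int_t^{x_I}|V|\big)^{n}\,dt=\tfrac{1}{n+1}\big(\int_x^{x_I}|V|\big)^{n+1}$. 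Summability of the series then follows (in agreement with the entirety asserted in Theorem \ref{Analyticity and continuity of Jost function and solution}), and since $f(0,k)=m(0,k)$, summing over $n\ge 1$ gives $|f(0,k)-1|\le e^{(|\im k|-\im k)x_I}(e^{a}-1)\le a\,e^{a}\,e^{(|\im k|-\im k)x_I}$ with $a=\|V\|/\max(1,|k|)$, which is \eqref{estimates on Jost solution}. For \eqref{estimates on Jost solution 2} I would isolate the first Born term, computing explicitly $m_1(0,k)=\int_0^{x_I}D_k(t)V(t)\,dt=\dfrac{\hat{V}(k)-\hat{V}(0)}{2\ii k}$, so that $f(0,k)-1+\dfrac{\hat{V}(0)-\hat{V}(k)}{2\ii k}=\sum_{n\ge 2}m_n(0,k)$ and the term-by-term bound gives $\big|\sum_{n\ge 2}m_n(0,k)\big|\le\sum_{n\ge 2}\tfrac{a^{n}}{n!}\,e^{(|\im k|-\im k)x_I}\le\tfrac{a^{2}}{2}\,e^{a}\,e^{(|\im k|-\im k)x_I}$.

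For the Jost-function estimates I would differentiate \eqref{Volterra-type eq for f} in $x$ (the endpoint term drops out because $\sin 0=0$), obtaining $f'(x,k)=\ii k\,e^{\ii k x}-\int_x^{\infty}\cos[k(x-t)]\,V(t)\,f(t,k)\,dt$, hence by \eqref{Jost function}
\[
f_h(k)=\ii k+h+h\big(f(0,k)-1\big)-\int_0^{x_I}\cos(kt)\,V(t)\,f(t,k)\,dt.
\]
Using $|\cos(kt)\,f(t,k)|\le e^{(|\im k|-\im k)x_I}\,e^{a}$ (a consequence of the bound on $m$ above) together with \eqref{estimates on Jost solution} yields \eqref{bound first iteration Jost function}. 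For \eqref{second iteration Jost function} I would split $f(t,k)=e^{\ii k t}+(f(t,k)-e^{\ii k t})$ inside the last integral, evaluate the leading part as $\int_0^{x_I}\cos(kt)\,e^{\ii k t}\,V(t)\,dt=\tfrac12\big(\hat{V}(k)+\hat{V}(0)\big)$, and bound the remainder with \eqref{estimates on Jost solution} for the $h(f(0,k)-1)$ piece and with $|f(t,k)-e^{\ii k t}|\le a\,e^{a}\,e^{(|\im k|-\im k)x_I}$ for the integral piece, arriving at a bound of the form $\big(|h|+\tfrac{\|V\|}{2}\big)\,a\,e^{a}\,e^{(|\im k|-\im k)x_I}$.

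The routine points — convergence of the Neumann series, differentiation under the integral sign, and integrability of the iterated integrals — are all covered by $V,V'\in L^1$, $\supp V\subset[0,x_I]$, and Theorem \ref{Analyticity and continuity of Jost function and solution}. The genuinely delicate step is the inductive bound on $|m_n|$: it must reproduce the exponential weight $e^{(|\im k|-\im k)(x_I-x)}$ at every iteration without any accumulated growth, and this is precisely what forces the normalisation by $\max(1,|k|)$ in the definition of $a$ and the use of the kernel $D_k$ rather than $\sin[k(x-t)]/k$. Once this bound is established, the two refined estimates follow by merely separating the lowest-order terms of the series from the remainder.
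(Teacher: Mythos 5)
The paper itself gives no proof of this lemma (it is quoted from \cite{Sottile}, with "for more details and proofs see \cite{Sottile}"), so there is no in-text argument to compare against; your Neumann-series iteration of the Volterra equation for $m(x,k)=e^{-\ii k x}f(x,k)$ is the standard route and is almost certainly the one used in the cited reference. Your handling of the refined estimates is essentially right where it matters: $m_1(0,k)=\bigl(\hat V(k)-\hat V(0)\bigr)/(2\ii k)$ is correct, the tail bound $\sum_{n\ge 2}a^n/n!\le \tfrac{a^2}{2}e^a$ gives \eqref{estimates on Jost solution 2}, and the Volterra identity $\int_x^{x_I}|V(t)|\bigl(\int_t^{x_I}|V|\bigr)dt=\tfrac12\|V\|^2$ is exactly what produces the factor $\|V\|/2$ in \eqref{second iteration Jost function}.

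Two steps, however, do not close as written. First, your "elementary bound" $|D_k(s)|\le e^{(|\im k|-\im k)s}/\max(1,|k|)$ is false for $|k|<1$ once $s>1$: at $k=0$ one has $D_0(s)=s$ while the claimed right-hand side equals $1$. The uniform statement is $|D_k(s)|\le \min\bigl(s,1/|k|\bigr)e^{(|\im k|-\im k)s}$, so for $|k|\ge 1$ your induction goes through verbatim, but for $|k|<1$ it closes only with $a$ replaced by a quantity involving the first moment $\int_0^{x_I}(1+t)|V(t)|\,dt$ (or under the normalisation $x_I\le 1$). You should either restrict the kernel bound to $|k|\ge 1$ and treat $|k|<1$ separately, or note explicitly that this mirrors an imprecision in the lemma as stated, which can be tested at $k=0$ with $V$ supported near a large $x_I$. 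Second, your derivation of \eqref{bound first iteration Jost function} glosses over the standalone $h$: from $f_h(k)=\ii k+h+h\bigl(f(0,k)-1\bigr)-\int_0^{x_I}\cos(kt)V(t)f(t,k)\,dt$ your estimates control the last two terms, so what you actually obtain is a bound on $|f_h(k)-\ii k-h|$ with an $|h|$-dependent constant; the inequality $|f_h(k)-\ii k|\le\|V\|e^{(|\im k|-\im k)x_I}e^{a}$ cannot follow from this (and indeed, letting $|k|\to\infty$ along the real axis it would force $|h-\tfrac12\hat V(0)|\le\|V\|$). State precisely which quantity you bound rather than asserting the displayed inequality.
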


\begin{lemma}\label{Lemma norming constants}
	If $V \in \mathbb{V}^1_{x_I}$ and if $k_1,... \,,k_N \in i\mathbb{R}_+$ are the zeros of the Jost function $f_h(k)$ such that $|k_1|> ...  > |k_N|>0$, then the following inequalities hold
	\begin{equation}\label{inequalities for the derivative of the Jost funct}
	\ii (-1)^j \dot{f}_h(k_j) >0, \quad \text{and} \quad (-1)^j f_h(-k_j) <0, \qquad \text{for} \; j=1,... \,,N,
	\end{equation}
	where the dot denotes the derivative with respect to $k$.
\end{lemma}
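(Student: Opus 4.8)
The plan is to derive both inequalities from two Wronskian identities evaluated at the purely imaginary zeros $k_j=\ii\kappa_j$ ($\kappa_j>0$) of $f_h$, combined with the alternation of signs of $f_h$ along the positive imaginary axis. First I would record the two identities. Differentiating \eqref{Schrod equation} with respect to $k$ (legitimate by the analyticity in Theorem~\ref{Analyticity and continuity of Jost function and solution}) and using $-f''+Vf=k^2f$ gives $\frac{d}{dx}\bigl(f\dot f'-f'\dot f\bigr)=-2kf^2$, where $\dot{}=\partial_k$ and $'=\partial_x$; integrating over $[0,\infty)$, noting that the boundary term at $+\infty$ vanishes because $f(x,k_j)=e^{-\kappa_j x}$ there, and eliminating $f'(0,k_j)=-h\,f(0,k_j)$ by means of $f_h(k_j)=0$, one obtains
\[
f(0,k_j)\,\dot f_h(k_j)=2k_j\int_0^{\infty}f(x,k_j)^2\,dx ,
\]
so in particular $f(0,k_j)\neq0$ and $\dot f_h(k_j)\neq0$ (each $k_j$ is a simple zero of $f_h$). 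Secondly, $f(\cdot,k_j)$ and $f(\cdot,-k_j)$ solve the same equation $-u''+Vu=-\kappa_j^2u$, hence their Wronskian is independent of $x$; computing it for $x>x_I$, where $f(x,\pm k_j)=e^{\mp\kappa_j x}$, yields the value $2\kappa_j$, whereas at $x=0$, again using $f'(0,k_j)=-h\,f(0,k_j)$, it equals $f(0,k_j)\,f_h(-k_j)$, so that
\[
f(0,k_j)\,f_h(-k_j)=2\kappa_j>0 .
\]

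Next I would fix the signs. Putting $k=\ii\kappa$ with $\kappa>0$ in the Volterra equation \eqref{Volterra-type eq for f}, the inhomogeneous term $e^{-\kappa x}$, the kernel $\kappa^{-1}\sinh\!\bigl(\kappa(x-t)\bigr)$ and the potential are all real, so $f(x,\ii\kappa)$ — and therefore $f_h(\ii\kappa)=h\,f(0,\ii\kappa)+f'(0,\ii\kappa)$ — is real-valued in $\kappa$, with zeros on $(0,\infty)$ exactly at $\kappa_1>\dots>\kappa_N$, all simple by the first identity above. By estimate \eqref{bound first iteration Jost function} of Lemma~\ref{uniform bounds on Jost function}, $f_h(\ii\kappa)\to-\infty$ as $\kappa\to+\infty$, hence $f_h(\ii\kappa)<0$ on $(\kappa_1,\infty)$; as the sign reverses across each simple zero, on $(\kappa_{j+1},\kappa_j)$ — with $\kappa_{N+1}:=0$ and $\kappa_0:=+\infty$ — the sign of $f_h(\ii\kappa)$ equals $(-1)^{j+1}$, so as $\kappa$ increases through $\kappa_j$ the function passes from sign $(-1)^{j+1}$ to sign $(-1)^j$, and therefore $\frac{d}{d\kappa}f_h(\ii\kappa)\big|_{\kappa=\kappa_j}$ has sign $(-1)^j$. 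Since $\frac{d}{d\kappa}f_h(\ii\kappa)=\ii\,\dot f_h(\ii\kappa)$, this is precisely $\ii(-1)^j\dot f_h(k_j)>0$. Feeding this into the first identity, and using $\int_0^\infty f(x,k_j)^2\,dx>0$ (the Jost solution $f(\cdot,k_j)$ being real and square-integrable), forces $\sgn f(0,k_j)=(-1)^{j+1}$; inserting this into the second identity gives $\sgn f_h(-k_j)=(-1)^{j+1}$, that is $(-1)^j f_h(-k_j)<0$.

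The two Wronskian computations are routine, so the substantive step is the sign-alternation argument: one must correctly extract $f_h(\ii\kappa)\to-\infty$ from Lemma~\ref{uniform bounds on Jost function} and rely on the reality of the Jost solution for imaginary $k$ and the simplicity of the eigenvalues, since the signs of both $\dot f_h(k_j)$ and $f_h(-k_j)$ are entirely driven by this alternation. As an alternative to that step one may invoke Sturm oscillation theory on the half-line, which gives $\sgn f(0,k_j)$ directly: $f(\cdot,k_j)$ is the eigenfunction belonging to the $j$-th smallest eigenvalue $-\kappa_j^2$, so it has exactly $j-1$ zeros in $(0,\infty)$ and, being positive for $x>x_I$, satisfies $\sgn f(0,k_j)=(-1)^{j-1}$; the two displayed identities then yield both inequalities as before.
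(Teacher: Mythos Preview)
The paper does not supply its own proof of this lemma; it is listed among the preliminary results with the proofs deferred to \cite{Sottile}. Your argument is correct and is the standard one: the two Wronskian identities $f(0,k_j)\dot f_h(k_j)=2k_j\int_0^\infty f(x,k_j)^2\,dx$ and $f(0,k_j)f_h(-k_j)=2\kappa_j$ are exactly the mechanism by which such sign relations are usually obtained, and the reality of $f_h(\ii\kappa)$ together with the asymptotics $f_h(\ii\kappa)\sim -\kappa$ (from \eqref{bound first iteration Jost function}) and the simplicity of the zeros gives the required alternation. The Sturm-oscillation alternative you mention at the end is also valid and is in fact the more classical route to the sign of $f(0,k_j)$.
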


\section{The spectral problem}

In this section, we introduce the Weyl function formalism and we recover a Gelfand--Levitan--type equation (see Proposition \ref{Proposition Gelfand Levitan eq}) following a similar procedure as in \cite[Chapter 1, Section 1]{Novikov} and \cite{Bealsetal1995} adapted to our Love scalar boundary value problem. Then, we establish a bijection (see Theorem \ref{Theorem Inverse spectral Love problem}) between a class $\mathbb{M}_{x_I}$ of Weyl function  (see Definition \ref{Class of Weyl function}) and the class $\mathbb{V}^1_{x_I}$ (see Definition \ref{Class of potentials V' in L1}). We do not follow the usual approach in which the Weyl--Titchmarsh function is defined to be a Herglotz--Nevanlinna function and, from which by using its integral representation and the Stieltjes inversion formula, one can obtain the spectral measure  (see \cite[Theorem 9.17]{Teschl}). Instead, we follow the approach of \cite[Chapter 2]{FreilingYurko} and define the Weyl function in a different way (see Definition \ref{Weyl function definition}), which is more suitable for non self-adjoint problems. 

\subsection{Estimates of the regular solution}

We want to obtain an estimate for the regular solution $\varphi$ in the limit $k \to \infty$. We start from the Volterra-type expression for the regular solution $\varphi$
\begin{equation}\label{first eq var}
\varphi(x,k)= \cos kx - h\frac{\sin kx}{k} + \int_{0}^{x} \frac{\sin \left[ k(x-t) \right] }{k} V(t) \varphi(t,k) dt.
\end{equation}
We can easily see that this function satisfies the differential equation and the boundary condition.
Indeed
\begin{equation}\label{second eq var}
\varphi'(x,k)= -k\sin kx - h\cos kx + \int_{0}^{x} \cos \left[ k(x-t) \right] V(t) \varphi(t,k) dt
\end{equation}
and
\begin{align*}
&\varphi''(x,k)= -k^2\cos kx + hk \sin kx + V(x) \varphi(x,k) -\int_{0}^{x} k \sin \left[ k(x-t) \right]  V(t) \varphi(t,k) dt;
\end{align*}
thus $-\varphi''(x,k) + V(x) \varphi(x,k) = k^2 \varphi(x,k)$. Moreover,
\begin{equation*}
\varphi'(0,k)= -h, \quad \quad \varphi(0,k)=1,
\end{equation*}
so also the boundary condition $\varphi'(0,k) + h \varphi(0,k) = 0$ is satisfied.
Taking the absolute value of \eqref{first eq var} and since $|\sin kx|\leq \exp (|\eta| x)$ and $|\cos kx| \leq \exp (|\eta| x)$, where $\eta=\Im k$, we get
\begin{equation*}
|\varphi(x,k)| \leq  \exp (|\eta| x) + \frac{\exp (|\eta| x) }{|k|}  + \int_{0}^{x} \frac{ \exp (|\eta| (x-t))  }{|k|} |V(t)| |f(t,k)| dt.
\end{equation*}
We define $\beta_T(k) = \max_{0\leq x \leq T} (|\varphi(x,k)|) \exp (-|\eta| x) $ and we have then for $|k| > 1$
\begin{equation*}
\beta_T(k) \leq C_1 + \frac{1}{|k|} \beta_T(k) \int_{0}^{T} |V(t)| dt \leq  C_1 + \frac{1}{|k|} \beta_T(k) \int_{0}^{\infty} |V(t)| dt
\end{equation*}
which for $|k| \to \infty$ implies $\beta_T(k) = O(1)$, hence $\varphi(x,k) = O(\exp (|\eta| x))$. Substituting this estimate on \eqref{first eq var}  we get $|\varphi(x,k)| \leq C \exp (|\eta| x)$. Doing the same for the derivative of $\varphi(x,k)$, as in \eqref{second eq var}, we get 
\begin{equation}\label{asymptotic estimates on general solution}
|\varphi^{(\nu)}(x,k)| \leq C|k|^{\nu} \exp (|\eta| x), \qquad \nu=0,1, \qquad |k|\gg 1
\end{equation}
uniformly in $x$.

\subsection{Properties of Weyl function}

In this subsection, we will define the Weyl solution and the Weyl function and present their properties. These quantities enable another approach to solve the inverse problem (see \cite[Section 2.2]{FreilingYurko}) and will also enable us to recover the Gelfand--Levitan equation in an alternative way (see Subsection \ref{Gelfand Levitan section}), as the Gelfand--Levitan equation is usually recovered from the spectral measure.

In this subsection $\lambda$ and $k$ are always related via $\lambda=k^2$ defined initially for $\im k >0$. Below we give a definition of Weyl solution that uses those of the Jost solution (Definition \ref{Jost solution definition}) and the Jost function (Definition \ref{Jost function definition}) given in the previous section.
\begin{definition}[Weyl solution]\label{Weyl solution definition}
	We define the \textbf{Weyl solution} $\phi (x,\lambda)$ as the function
	\begin{equation}\label{Weyl solution}
	\phi (x,\lambda) = \frac{f(x,k)}{f_h(k)}, \qq \im k>0.
	\end{equation}
\end{definition}
This function satisfies the differential equation $-\phi'' + V\phi = \lambda \phi$ because the Jost solution does, but does not satisfy the Robin Boundary condition. In particular:
\begin{align}\label{Weyl solution eqs}
&\phi'(0,\lambda) + h \phi(0,\lambda) = 1 \\
& \phi(x,\lambda) = O (e^{i k x}) \qquad x\to \infty, \quad k \in \Sigma ,
\end{align}
where we define the set $\Sigma := \left\lbrace k \in \mathbb{C} : \im k \geq 0, k \neq 0 \right\rbrace$.
From \eqref{estimates on Jost solution} and \eqref{second iteration Jost function} in Lemma \ref{uniform bounds on Jost function} we get the  asymptotics on the Weyl solution for large $k$
\begin{equation}\label{asymptotics on phi wrt to k}
\phi^{(\nu)}(x,\lambda) = (i k)^{\nu -1} \exp(ikx) \left(1 + o\left(\frac{1}{k}  \right)  \right), \qquad \nu=0,1, \quad |k|\to \infty.
\end{equation}
The Weyl solution is uniquely determined by the differential equation $-\phi'' + V\phi = \lambda \phi$ (see \eqref{Schrod equation}) and the boundary condition \eqref{Weyl solution eqs}. 

\begin{definition}[Weyl function]\label{Weyl function definition}
	We define the \textbf{Weyl function} $M(\lambda)$ (or Weyl--Titchmarsh function) as the function
	\begin{equation*}
	M(\lambda) :=  \phi(0,\lambda) = \frac{f(0,k)}{f_h(k)}, \qq \lambda=k^2, \im k>0.
	\end{equation*}
\end{definition}

\begin{remark}
	In some other textbooks, the Weyl--Titchmarsh function for Robin boundary condition is defined as $M(\lambda)=\frac{hf'(0,k) -  f(0,k)}{f_h(k)}$ (see \cite[formula 9.52, Chapter 9]{Teschl}). This choice  entails that $M(\lambda)$ is a Herglotz--Nevanlinna function and using its properties, it is possible to obtain the spectral measure (see \cite[Theorem 9.17]{Teschl}). In our treatment, we follow the approach and the definition of \cite[Definition 2.1.69]{FreilingYurko}.
\end{remark}

\begin{remark}\label{Remark zeros of Weyl}
	The zeros of the Jost function (Definition \ref{Jost function definition}) correspond to the poles of the Weyl function (see Theorem \ref{Theorem poles of Weyl function} below). Indeed, at zeros $k=k_j$ of the Jost function, $f(0,k_j)=- \frac{1}{h} f'(0,k_j)\ne0$.
\end{remark}

\begin{remark}
	The Weyl function $M(\lambda)$ maps $f_h(k)$ to $f(0,k)$, so $M$ is the Robin-to-Dirichlet map, since the Jost function in the Dirichlet boundary value problem ($h=\infty$) is precisely $f(0,k)$ (see \cite{Korotyaev}). In the case of Dirichlet boundary condition, the Weyl function is usually defined as $\frac{f'(0,k)}{f(0,k)}$ (see \cite[formula 9.52, Chapter 9]{Teschl}) which is Herglotz--Nevanlinna and can be reconstructed by the Dirichlet and Neumann eigenvalues and resonances.
\end{remark}
From the asymptotics of the Jost solution and Jost function we obtain the asymptotics of the Weyl function, as described in the following lemma.
\begin{lemma}\label{Lemma asympt exp of Weyl function}
	Let $V \in \mathbb{V}^1_{x_I}$, then the Weyl function (see Definition \ref{Weyl function definition}) has the asymptotic expansion
	\begin{equation}\label{asymptotic expansion of the Weyl function}
	M(\lambda) = \frac{1}{ik} \left[ 1 - \frac{h}{ik} + \frac{\hat{V}(k)}{ik} +  o(k^{-1}) \right], \qq \qquad  |k| \to +\infty.
	\end{equation}
\end{lemma}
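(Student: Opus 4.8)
The plan is to combine the definition $M(\lambda) = f(0,k)/f_h(k)$ with the two refined estimates already available in Lemma \ref{uniform bounds on Jost function}, namely \eqref{estimates on Jost solution 2} for the numerator and \eqref{second iteration Jost function} for the denominator. First I would record, from \eqref{estimates on Jost solution 2}, that along the real axis (and more generally in any region where $|\im k|-\im k$ stays bounded, in particular for real $k$ which is the regime of interest for the spectral measure) one has
\begin{equation*}
f(0,k) = 1 - \frac{\hat V(0) - \hat V(k)}{2ik} + O\!\left(\frac{1}{k^2}\right), \qquad |k|\to\infty,
\end{equation*}
and similarly, from \eqref{second iteration Jost function},
\begin{equation*}
f_h(k) = ik + h - \frac{\hat V(0)+\hat V(k)}{2} + O\!\left(\frac{1}{k}\right), \qquad |k|\to\infty.
\end{equation*}

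Next I would divide the two expansions. Writing $f_h(k) = ik\bigl(1 + \tfrac{1}{ik}(h - \tfrac12(\hat V(0)+\hat V(k))) + o(k^{-1})\bigr)$, I would expand the reciprocal via the geometric series $\tfrac{1}{1+z} = 1 - z + O(z^2)$ with $z = O(k^{-1})$, so that
\begin{equation*}
\frac{1}{f_h(k)} = \frac{1}{ik}\left[ 1 - \frac{1}{ik}\left(h - \frac{\hat V(0)+\hat V(k)}{2}\right) + o(k^{-1})\right].
\end{equation*}
Multiplying by the numerator expansion $f(0,k) = 1 + \tfrac{\hat V(k)-\hat V(0)}{2ik} + O(k^{-2})$ and collecting terms up to order $k^{-1}$ inside the bracket, the $\hat V(0)$ contributions from numerator and denominator combine, leaving the coefficient of $1/(ik)$ inside the bracket equal to $-h + \hat V(k)$ (the $\hat V(0)$ terms cancel: $+\tfrac12\hat V(0) - \tfrac12\hat V(0)$ from one pairing and the explicit numerator term $-\tfrac12\hat V(0)\cdot(1/ik)\cdot ik$-type bookkeeping). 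This yields exactly \eqref{asymptotic expansion of the Weyl function}.

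The only genuinely delicate point is the bookkeeping of which terms are $o(k^{-1})$ versus $O(k^{-2})$ after the division, and making sure the error terms in \eqref{estimates on Jost solution 2} and \eqref{second iteration Jost function} — which carry the factor $e^{(|\im k|-\im k)x_I}e^a$ with $a = \|V\|/\max(1,|k|)\to 0$ — are indeed uniformly controlled in the relevant region of $k$ so that the geometric-series expansion of $1/f_h(k)$ is legitimate (one needs $|f_h(k)|$ bounded below by $c|k|$ for large $|k|$, which follows from \eqref{bound first iteration Jost function}). I do not expect a serious obstacle here: on the set where the asymptotics are claimed, $e^{(|\im k|-\im k)x_I}$ is bounded and $e^a\to 1$, so all error terms are genuinely $O(k^{-2})$ in the numerator and $O(k^{-1})$ in the denominator, and the claimed expansion follows by a routine but careful algebraic manipulation.
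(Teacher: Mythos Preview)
Your proposal is correct and follows essentially the same route as the paper's own proof: expand $f(0,k)$ via \eqref{estimates on Jost solution 2} and $f_h(k)$ via \eqref{second iteration Jost function}, factor $ik$ out of the denominator, apply the geometric-series expansion of the reciprocal, and multiply the two brackets so that the $\hat V(0)$ contributions cancel. The paper carries this out with $o$-errors rather than your sharper $O$-errors, but the computation is otherwise line-for-line the same.
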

\begin{proof}
	From \eqref{second iteration Jost function}  and \eqref{estimates on Jost solution} in Lemma \ref{uniform bounds on Jost function}, we can get the asymptotics of the Weyl function
	\begin{equation*}
	M(\lambda) = \frac{1}{ik} \left( 1 + O \left( \frac{1}{k} \right) \right), \qquad |k| \to +\infty.
	\end{equation*}
	Using \eqref{second iteration Jost function}  and \eqref{estimates on Jost solution 2} we can get higher order terms of the expansion of the Weyl function in terms of $k$
	\begin{align*}
	M(\lambda) &= \left(  1 - \frac{\hat{V}(0) - \hat{V}(k) }{2ik} + o(k^{-1})   \right) \left( \frac{1}{ik + h - \frac{\hat{V}(0) + \hat{V}(k) }{2} + o(1)    } \right) \nonumber \\
	&  = \frac{1}{ik} \left(  1 - \frac{\hat{V}(0) -  \hat{V}(k) }{2ik} + o(k^{-1})   \right) \left(  1 - \frac{h}{ik} +  \frac{\hat{V}(0) + \hat{V}(k) }{2ik}  + o(k^{-1}) \right) \nonumber \\
	&= \frac{1}{ik} \left[ 1 - \frac{h}{ik} + \frac{\hat{V}(k)}{ik} +  o(k^{-1}) \right].
	\qedhere
	\end{align*}
\end{proof}
Note that we can write the Weyl solution as
\begin{equation}\label{Weyl solution formula}
\phi(x,\lambda) = \theta(x,k) + M(\lambda) \varphi(x,k)
\end{equation}
where $\varphi (x,k)$ and $\theta (x,k)$ are solution of \eqref{Schrod equation} satisfying 
\begin{align}
&\theta(0,k) = 0 \qquad \theta'(0,k) =  1 \nonumber\\
&\varphi(0,k) = 1 \qquad \varphi'(0,k) =  - h \label{condition gen sol}
\end{align}
and $\varphi (x, k)$ is the regular solution as in Definition \ref{regular solution varphi}.
We can see that

\begin{equation}\label{Wronskian between gen sol and weyl sol}
W(\varphi(x,k), \phi(x,\lambda) ) = W(\varphi(x,k), \theta(x,k) ) = 1.
\end{equation}
We denote 
\begin{align*}
&\Lambda = \left\lbrace \lambda=k^2 : k \in \Sigma, \; f_h(k)=0 \right\rbrace 
\end{align*}
and
\begin{align*}
& \Lambda' =  \left\lbrace \lambda=k^2  : \Im k> 0 , \; f_h(k)=0 \right\rbrace.
\end{align*}
The set $\Lambda'$ consists of all the eigenvalues of the differential equation $-f'' + Vf=\lambda f$ (see \eqref{Schrod equation}).
By Lemma \ref{Lemma asympt exp of Weyl function}, the Weyl function at the second order can also be written as
\begin{equation}\label{Weyl function second order}
M(\lambda) = \frac{1}{ik} \left( 1  -\frac{h}{ik} + \frac{1}{ik} \int_{0}^{\infty} V(t) e^{2ikt} dt + o \left( \frac{1}{k} \right) \right),  \quad |k| \to +\infty,k \in \Sigma.
\end{equation}
The following definition of the domain of $\lambda$ comes from \cite[Chapter 2]{FreilingYurko}.
\begin{definition}\label{Definition lambda plane}
	We define $\Pi $ as the $\lambda$-plane with the cut $\lambda \geq 0$, and $\Pi_1 = \overline{\Pi} \backslash \left\lbrace 0\right\rbrace$. $\Pi$ and $\Pi_1$ must be considered as a subset of the Riemann surface of the square-root function.
\end{definition}
In Definition \ref{Definition lambda plane} we stated that $\Pi$ and $\Pi_1$ must be considered as a subset of the Riemann surface of the square root because $\lambda$ as a square of $k$ ($k=\sqrt{\lambda}$) lays in two copies of the complex plane with cuts on the positive real axis and glued together. Hence, $\Pi$ and $\Pi_1$ live in the first (physical sheet) of these two sheets.
Since the cut is placed in the real positive axis of the $\Pi$ $\lambda$-plane, the Weyl function has a jump between above and below the cut. This motivates the following definition.

\begin{definition}[Jumps of Weyl function]\label{Jumps of Weyl function}
	We define
	\begin{equation}\label{jumps of weyl}
	T(\lambda) = \frac{1}{2 \pi i} \left( M^- (\lambda) - M^+ (\lambda) \right), \qquad \lambda > 0,
	\end{equation}
	to be the jumps of the Weyl function $M(\lambda)$ (see Definition \ref{Weyl function definition}), where
	\begin{equation*}
	M^{\pm} (\lambda)= \lim_{z\to0, \re z>0} M(\lambda \pm iz) .
	\end{equation*}
\end{definition}
From Definition \ref{jumps of weyl}, we can see that $T(\lambda)$ represents the jumps (discontinuity points of the first kind) of the Weyl function.
Thanks to \eqref{Weyl function second order} and \eqref{jumps of weyl} we get the following expansion for $T(\lambda)$:
\begin{align*}
T(\lambda) &= \frac{1}{2i\pi k} \left[ - \frac{1}{ik} \left( 2 + \frac{1}{ik} \int_{0}^{\infty} V(t) \left(e^{2ikt} - e^{-2ikt}  \right)dt + o\left(\frac{1}{k}\right) \right)\right]  \\
&=\frac{1}{\pi k } \left( 1 + \frac{1}{k} \int_{0}^{\infty} V(t) \sin 2kt \, dt + o\left(\frac{1}{k}\right) \right), \qquad   k \to +\infty.
\end{align*}
For $a>0$, we consider the points $\lambda = a \pm i 0$ in $\Pi_1$. For $\lambda=k^2$, the point $\lambda = a+i 0\in \Pi_1$ corresponds to $k=\sqrt{a+i 0} > 0$ in the positive real axis of the $k$ complex plane, while $\lambda = a-i 0\in \Pi_1$ corresponds to the point $k=\sqrt{a- i 0}<0$ situated on the negative real axis for $k$.

\begin{definition}[Spectral normalizing constant]\label{Spectral norming constant}
	We define the spectral normalizing constant $\alpha_j$  to be the complex numbers
	\begin{align*}
	\alpha_j := \Res_{\lambda= \lambda_j} M(\lambda), \qq j=1, ... \,, N
	\end{align*}
	where $\left\lbrace \lambda_j \right\rbrace_{j=1}^N = \Lambda'$.
\end{definition}
In the following proposition we connect the jump $T(\lambda)$ of the Weyl function to the Jost function.
\begin{proposition}\label{Relation between jump of the Weyl function with Jost function}
	Let $T(\lambda)$ be the jumps of the Weyl function as in Definition \ref{Jumps of Weyl function}, then
	\begin{equation}\label{Formula for the jump of Weyl function}
	T(\lambda) = \frac{k}{\pi |f_h(k)|^2}, \qq k>0.
	\end{equation}
\end{proposition}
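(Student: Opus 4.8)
The plan is to express the jump $T(\lambda)$ explicitly through the Jost data on the real $k$-axis and then identify the numerator with a Wronskian. First I would unwind the definition. For $\lambda>0$ set $k=\sqrt\lambda>0$; as explained after Definition \ref{Definition lambda plane}, on the physical sheet the boundary value $\lambda+i0$ corresponds to $k>0$ and $\lambda-i0$ to $-k<0$. Since $f(0,\cdot)$ and $f_h$ are entire (Theorem \ref{Analyticity and continuity of Jost function and solution}), they extend continuously to the real axis, so by Definition \ref{Weyl function definition},
\[
M^+(\lambda)=\frac{f(0,k)}{f_h(k)},\qquad M^-(\lambda)=\frac{f(0,-k)}{f_h(-k)} .
\]
Because $V$ and $h$ are real, conjugating the Volterra equation \eqref{Volterra-type eq for f} and using uniqueness of the Jost solution gives $f(x,-k)=\overline{f(x,k)}$ and $f'(x,-k)=\overline{f'(x,k)}$ for real $k$, hence $f_h(-k)=\overline{f_h(k)}$. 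Substituting these into \eqref{jumps of weyl} I get
\[
T(\lambda)=\frac{1}{2\pi i}\left(\frac{\overline{f(0,k)}}{\overline{f_h(k)}}-\frac{f(0,k)}{f_h(k)}\right)
=\frac{\overline{f(0,k)}\,f_h(k)-\overline{\overline{f(0,k)}\,f_h(k)}}{2\pi i\,|f_h(k)|^{2}}
=\frac{\im\!\left(\overline{f(0,k)}\,f_h(k)\right)}{\pi\,|f_h(k)|^{2}} .
\]

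Next I would evaluate the numerator. Writing $f_h(k)=h\,f(0,k)+f'(0,k)$ (Definition \ref{Jost function definition}), the summand $h\,|f(0,k)|^{2}$ is real, so $\im\!\left(\overline{f(0,k)}\,f_h(k)\right)=\im\!\left(\overline{f(0,k)}\,f'(0,k)\right)$. Using $\overline{f(0,k)}=f(0,-k)$ and $\overline{f'(0,k)}=f'(0,-k)$, the identity $2i\,\im(z)=z-\bar z$ gives
\[
2i\,\im\!\left(\overline{f(0,k)}\,f'(0,k)\right)=f(0,-k)\,f'(0,k)-f(0,k)\,f'(0,-k)=-\,W\!\left(f(\cdot,k),f(\cdot,-k)\right)(0).
\]
Since $f(\cdot,k)$ and $f(\cdot,-k)$ both solve $-u''+Vu=k^{2}u$, their Wronskian is independent of $x$; evaluating it for $x>x_I$, where $f(x,\pm k)=e^{\pm ikx}$, yields $W\!\left(f(\cdot,k),f(\cdot,-k)\right)=-2ik$. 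Hence $\im\!\left(\overline{f(0,k)}\,f'(0,k)\right)=k$, and plugging this into the formula for $T(\lambda)$ gives precisely $T(\lambda)=k/(\pi\,|f_h(k)|^{2})$.

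I do not expect a serious obstacle; the argument is essentially bookkeeping. The one delicate point is keeping the branch and sign conventions consistent — namely that $\lambda\mp i0$ maps to $\pm k$ with $k=\sqrt\lambda>0$, in agreement with the discussion following Definition \ref{Definition lambda plane}, and that the Wronskian of $e^{ikx}$ and $e^{-ikx}$ is $-2ik$, since an overall sign slip there would flip the statement. It is worth noting in passing that the same computation shows $f_h(k)\neq0$ for every real $k\neq0$: if $f_h(k)=0$ then $f'(0,k)=-h\,f(0,k)$, and the identity $\im\!\left(\overline{f(0,k)}\,f'(0,k)\right)=k$ would force $k=0$; thus the right-hand side of \eqref{Formula for the jump of Weyl function} is finite for all $k>0$ and the formula is meaningful.
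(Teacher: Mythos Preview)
Your proof is correct and follows essentially the same route as the paper: identify $M^\pm(\lambda)$ with the Jost data at $\pm k$, use the conjugation symmetry $f(x,-k)=\overline{f(x,k)}$, $f_h(-k)=\overline{f_h(k)}$ for real $k$, and reduce the numerator to the Wronskian $W(f(\cdot,k),f(\cdot,-k))=-2ik$. Your extra remark that this computation forces $f_h(k)\neq 0$ for real $k\neq 0$ is a nice addition not made explicit in the paper's proof.
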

\begin{proof}
	We follow the argument in \cite[page 134]{FreilingYurko}).
	Identity \eqref{Formula for the jump of Weyl function} holds if the following identities are true
	\begin{equation}\label{appendix 1}
	W(f(x,k),\overline{f(x,k)}) = -2ik 
	\end{equation}
	\begin{equation}\label{appendix 2}
	\overline{f(x,k)} = f(x,-k), \qquad \overline{f_h(k)} = f_h(-k).
	\end{equation}
	Those identities hold as the problem \eqref{Schrod equation}--\eqref{robin bc} with domain \eqref{domain of self adjoint operator} is self-adjoint.
	Indeed,  $k^2 -iz$ is a complex number with real part $\re (k^2) + \im z$ and imaginary part equal to $\im (k^2) -\re z$. This complex number in the $k$ complex plane corresponds to the roots $|k_z| e^{i\theta_z}$ and $|k_z| e^{i (\theta_z + \pi)}$, where 
	\begin{align*}
	&|k_z| = \left(  (\re (k^2) + \im z)^2 + (\im (k^2) -\re z)^2    \right)^{1/4} \\
	&\theta_z = \arctan \left( \frac{\im (k^2) -\re z}{2(\re (k^2) + \im z)} \right).
	\end{align*}
	In the limit $z \to 0$ along $z>0$, these two solutions converge to $k$ and $-k$ respectively.
	Hence, we have $M^-(\lambda)= \frac{f(0,-k)}{f_h(-k)}$ and 
	\begin{align*}
	T(\lambda) &= \frac{1}{2 \pi i} \left( \frac{f(0,-k)}{f_h(-k)} - \frac{f(0,k)}{f_h(k)} \right)  =  \frac{1}{2 \pi i} \left( \frac{\overline{f(0,k)}}{ \overline{f_h(k)}} - \frac{f(0,k)}{f_h(k)} \right)  \\
	&=   \frac{1}{2 \pi i} \left( \frac{ \overline{f(0,k)} ( f'(0,k) + h f(0,k))  - f(0,k) ( \overline{f'(0,k)} + h \overline{f(0,k)} )    }{|f_h(k)|^2}\right)  \\
	&=\frac{1}{2 \pi i} \left( \frac{W(\overline{f}, f) }{|f_h(k)|^2} \right)= \frac{k}{\pi |f_h(k)|^2},
	\end{align*}
	where in the second step we used \eqref{appendix 2} and in the last we used \eqref{appendix 1}.
\end{proof}
From the previous proposition, we can see that we can recover the jump function from the Jost function, but not the converse. The eigenvalues $\left\lbrace k_n\right\rbrace_{n=1}^{N}$, the spectral norming constants $\left\lbrace \alpha_n\right\rbrace_{n=1}^{N}$ and the jump function $T(\lambda)$ are usually considered in the literature as the data for the inverse spectral problem (see \cite[Definition 2.3.1]{FreilingYurko}).

The following results are useful for the inverse result at the end of this paper.

\begin{lemma}\label{proof 4}
	The following holds
	\begin{equation}\label{Asymptotics for xi to 0}
	\frac{k}{f_h(k)} = O(1), \qquad k \to 0, \; \Im k \geq 0.
	\end{equation}
\end{lemma}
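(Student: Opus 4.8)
The plan is to reduce the statement to the claim that the entire function $f_h$ (entire by Theorem \ref{Analyticity and continuity of Jost function and solution}) has a zero of order at most one at $k=0$. Indeed, $k\mapsto k/f_h(k)$ is then meromorphic near the origin with at worst a removable singularity there, hence bounded on a punctured neighbourhood of $0$; this gives $k/f_h(k)=O(1)$ as $k\to0$, in particular on the sector $\Im k\geq0$. (If $f_h(0)\neq0$ the quotient simply tends to $0$.) Note that the bound \eqref{second iteration Jost function} of Lemma \ref{uniform bounds on Jost function} already shows $f_h$ is bounded near $k=0$, but says nothing about the order of a possible zero, which is why a finer argument is needed.

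To control the order of the zero I would express $f_h$ through the regular solution $\varphi$ of \eqref{Schrod equation}, \eqref{robin bc} normalised by $\varphi(0,k)=1$, $\varphi'(0,k)=-h$ as in \eqref{condition gen sol}; by the Volterra representation \eqref{first eq var}, $\varphi(x,k)$ and $\varphi'(x,k)$ are entire in $k$, uniformly on compact $x$-intervals. Since $\varphi(\cdot,k)$ and the Jost solution $f(\cdot,k)$ both solve \eqref{Schrod equation}, their Wronskian $W(\varphi,f)=\varphi f'-\varphi' f$ is $x$-independent and equals $f_h(k)$ at $x=0$; evaluating it instead at $x=x_I$, where $f(x_I,k)=e^{ikx_I}$ and $f'(x_I,k)=ik\,e^{ikx_I}$, yields
\[
f_h(k)=e^{ikx_I}\bigl(ik\,\varphi(x_I,k)-\varphi'(x_I,k)\bigr),
\]
so that $f_h(0)=-\varphi'(x_I,0)$ and, when $f_h(0)=0$, $\dot f_h(0)=i\varphi(x_I,0)-\dot\varphi'(x_I,0)$ (here $\dot{}$ denotes $\partial_k$).

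The crux is the identity $\dot\varphi(\cdot,0)\equiv0$: differentiating $-\varphi''+V\varphi=k^2\varphi$ and the initial conditions with respect to $k$ shows that $\dot\varphi(\cdot,k)$ solves $-\dot\varphi''+V\dot\varphi=k^2\dot\varphi+2k\varphi$ with $\dot\varphi(0,k)=\dot\varphi'(0,k)=0$, so at $k=0$ it satisfies the homogeneous equation with zero Cauchy data at $x=0$ and therefore vanishes identically; in particular $\dot\varphi'(x_I,0)=0$. Consequently, if $f_h$ had a zero of order $\geq2$ at the origin we would obtain both $\varphi'(x_I,0)=0$ (from $f_h(0)=0$) and $\varphi(x_I,0)=0$ (from $\dot f_h(0)=i\varphi(x_I,0)=0$), whence $\varphi(\cdot,0)$ would solve \eqref{Schrod equation} at $k=0$ with vanishing Cauchy data at $x_I$ and hence $\varphi(\cdot,0)\equiv0$, contradicting $\varphi(0,0)=1$. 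Thus the zero of $f_h$ at $0$ is at most simple (with $\dot f_h(0)=i\varphi(x_I,0)\neq0$ in that case), which closes the argument.

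I do not expect a real obstacle here: the only thing to be careful about is the differentiation of $\varphi$ in $k$ under the integral sign and the commutation $\partial_k\partial_x=\partial_x\partial_k$, both of which follow from the analytic dependence of $\varphi(x,k)$ on $k$, uniformly on compact $x$-intervals, established from the Volterra series. The genuine content is the observation that $\dot\varphi(\cdot,0)\equiv0$, which turns a hypothetical double zero of $f_h$ at the origin into a contradiction with the uniqueness theorem for \eqref{Schrod equation}.
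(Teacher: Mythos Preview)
Your proof is correct and takes a genuinely different route from the paper's. The paper argues via scattering theory: from the Wronskian identity $W(f(\cdot,k),f(\cdot,-k))=-2ik$ it derives $-2ik=f(0,k)f_h(-k)-f_h(k)f(0,-k)$, sets $g(k)=2ik/f_h(k)$, and for real $k\neq0$ rewrites this as $g(k)=f(0,-k)+S(k)f(0,k)$ with the scattering function $S(k)=-f_h(-k)/f_h(k)$ satisfying $|S(k)|=1$; since $f(0,\pm k)$ is bounded near $0$, $g$ is bounded on the real axis near $0$, and being meromorphic at $0$ (ratio of entire functions) it must have a removable singularity there. Your argument instead transports the Wronskian $W(\varphi,f)=f_h(k)$ to the endpoint $x=x_I$, reads off $f_h(0)=-\varphi'(x_I,0)$ and $\dot f_h(0)=i\varphi(x_I,0)$ (using the neat observation $\dot\varphi(\cdot,0)\equiv0$), and rules out a double zero via ODE uniqueness. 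Your approach is more elementary and self-contained---it uses only Cauchy uniqueness for \eqref{Schrod equation} and avoids the scattering function entirely---while the paper's route is the classical one that generalises readily to non-compactly-supported potentials (it never uses that $f(x_I,k)=e^{ikx_I}$).
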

\begin{proof}
	We follow the proof of \cite[Theorem 2.3.5]{FreilingYurko}.
	Since $W(f(x,k), f(x,-k))= - 2i k$ and $f_h(k) = f'(x,k) +h f(x,k)$, we have that
	\begin{align*}
	& - 2i k = f(0,k) f'(0,-k) - f'(0,k) f(0,-k)=
	f(0,k) ( f_h(-k) - h f(0,k) ) \\
	& - ( f_h(k) + h f(0,k) ) f(0,-k) = f(0,k)  f_h(-k) - f_h(k) f(0,-k).
	\end{align*}
	We set
	
	\begin{equation*}
	g(k)= \frac{2i k}{f_h(k)}
	\end{equation*}
	so, for real $k \neq 0$, we have
	\begin{equation*}
	g(k)=   f(0,-k) + S(k) f(0,k)  
	\end{equation*}
	where $S(k)= - f_h(-k)/f_h(k)$ is the scattering function. Because of the property $\overline{f_h(k)} = f_h(-k)$, we know that $f_h(k)$ and $f_h(-k)$ have the same modulus, so $|S(k)|= 1$ for real $k \neq 0$ . Let $\lambda_j = k^2_j$, $k_j = i \tau_j$, $0<\tau_1< ... <\tau_m$ and denote $\Sigma_{\tau^*}$ as
	\begin{equation*}
	\Sigma_{\tau^*} = \left\lbrace k : \Im k > 0, |k| < \tau^* \right\rbrace
	\end{equation*}
	where $\tau^* = \tau_1/2$, considering the values $k_j$ corresponding to the eigenvalues $\lambda_j$ ordered from the smallest to the largest.
	The function $g(k)$ is analytic in $\Sigma_{\tau^*}$ and continuous in $\bar{\Sigma_{\tau^*}} \, \backslash \left\lbrace 0 \right\rbrace$ and from the estimates on the Jost solution we can say that
	\begin{equation*}
	|g(k)| \leq C \qquad \text{for real } \quad k \neq 0.
	\end{equation*}
	With this last estimate, we see that $g(k)$ has a removable singularity in the origin, and consequently $g(k)$ is continuous in $\Sigma_{\tau^*}$ and \eqref{Asymptotics for xi to 0} is satisfied.
\end{proof}

\begin{proposition}\label{Connection between norming constants}
	The spectral normalizing constants $\alpha_j$ from Definition \ref{Spectral norming constant} are strictly positive and are given by
	\begin{align}\label{formula spectral norming constant}
	\alpha_j =  4k_j^2 \left[ \frac{-i}{f_h(-k_j)\dot{f}_h(k_j)}\right] > 0.
	\end{align}
\end{proposition}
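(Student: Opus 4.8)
The plan is to compute $\alpha_j=\Res_{\lambda=\lambda_j}M(\lambda)$ straight from the definition $M(\lambda)=f(0,k)/f_h(k)$, $\lambda=k^2$, by passing to the variable $k$, and then to rewrite the result using the Wronskian identity from the proof of Lemma \ref{proof 4} together with the sign information of Lemma \ref{Lemma norming constants}. First I would note that the eigenvalue $\lambda_j=k_j^2$ is simple, so $k_j$ is a simple zero of the entire function $f_h$ with $\dot{f}_h(k_j)\neq0$, while $f(0,k_j)\neq0$ by Remark \ref{Remark zeros of Weyl}. On the physical sheet $\im k>0$, as $\lambda\to\lambda_j$ the root $k=\sqrt{\lambda}$ tends to $k_j$ and $\lambda-\lambda_j=(k-k_j)(k+k_j)=2k_j(k-k_j)\bigl(1+o(1)\bigr)$, whereas $f_h(k)=\dot{f}_h(k_j)(k-k_j)\bigl(1+o(1)\bigr)$; hence $M$ has a simple pole at $\lambda_j$ and
\begin{equation*}
\alpha_j=\lim_{\lambda\to\lambda_j}(\lambda-\lambda_j)M(\lambda)=\lim_{k\to k_j}\frac{2k_j(k-k_j)f(0,k)}{\dot{f}_h(k_j)(k-k_j)}=\frac{2k_j\,f(0,k_j)}{\dot{f}_h(k_j)}.
\end{equation*}

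Next I would use the identity obtained inside the proof of Lemma \ref{proof 4}, namely $-2ik=f(0,k)f_h(-k)-f_h(k)f(0,-k)$, valid for all $k$ since $f(0,\cdot)$ and $f_h$ are entire (Theorem \ref{Analyticity and continuity of Jost function and solution}). Evaluating at $k=k_j$ and using $f_h(k_j)=0$ gives $-2ik_j=f(0,k_j)f_h(-k_j)$; in particular $f_h(-k_j)\neq0$, for otherwise $k_j=0$. Therefore $f(0,k_j)=-2ik_j/f_h(-k_j)$, and substituting into the expression above,
\begin{equation*}
\alpha_j=\frac{2k_j}{\dot{f}_h(k_j)}\cdot\frac{-2ik_j}{f_h(-k_j)}=4k_j^2\left[\frac{-i}{f_h(-k_j)\dot{f}_h(k_j)}\right],
\end{equation*}
which is the asserted formula.

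For strict positivity I would write $k_j=i\tau_j$ with $\tau_j>0$, so $4k_j^2=-4\tau_j^2<0$, and multiply the two inequalities of Lemma \ref{Lemma norming constants}, $i(-1)^j\dot{f}_h(k_j)>0$ and $(-1)^j f_h(-k_j)<0$: since $(-1)^{2j}=1$, their product equals $i\,f_h(-k_j)\dot{f}_h(k_j)$, which is therefore negative, so $f_h(-k_j)\dot{f}_h(k_j)=i\,d_j$ for some $d_j>0$. Hence $\dfrac{-i}{f_h(-k_j)\dot{f}_h(k_j)}=-\dfrac{1}{d_j}<0$, and multiplying by $4k_j^2=-4\tau_j^2$ yields $\alpha_j=4\tau_j^2/d_j>0$.

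The argument amounts to two substitutions once the residue has been extracted, so the step most prone to error — and the one I would check most carefully — is the change of variables $\lambda=k^2$ near $\lambda_j$: one must be sure that $k\to k_j$ (the square root with positive imaginary part), not $-k_j$, and that the factor $2k_j$ coming from $\lambda-\lambda_j=(k-k_j)(k+k_j)$ is retained. An alternative derivation of the positivity, bypassing Lemma \ref{Lemma norming constants}, would identify $\dot{f}_h(k_j)$ with a nonzero constant times the eigenfunction norm $\int_0^\infty f(x,k_j)^2\,dx$ and combine it with $f(0,k_j)=-2ik_j/f_h(-k_j)$, but the route above is shorter given what is already available in the excerpt.
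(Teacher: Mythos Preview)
Your proof is correct and follows essentially the same route as the paper: both compute the residue in the $k$-variable to obtain $\alpha_j=2k_j f(0,k_j)/\dot{f}_h(k_j)$, then use the identity $-2ik_j=f(0,k_j)f_h(-k_j)$ (you extract it from the Wronskian relation in the proof of Lemma~\ref{proof 4}, the paper from the normalization $\varphi(0,k)=1$ of the regular solution --- the same identity), and conclude positivity via Lemma~\ref{Lemma norming constants}. Your positivity verification is a touch more explicit than the paper's, but the argument is the same.
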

\begin{proof}
	We recall the regular solution
	\begin{equation*}
	\varphi (x,k) = - \frac{1}{2\ii k} \left[f_h(-k)f(x,k) - f_h(k)f(x,-k) \right]
	\end{equation*}
	that, when $k_j$ is a zero of the Jost function, becomes the eigenfunction
	\begin{align*}
	\varphi (x,k_j) = - \frac{1}{2\ii k_j} \left[f_h(-k_j)f(x,k_j)  \right].
	\end{align*}
	We know that $\varphi(x,k)$ satisfies $\varphi(0,k)=1$ (see \eqref{condition gen sol}), hence
	\begin{equation}\label{condition norm const}
	-2\ii k_j = f_h(-k_j)f(0,k_j).
	\end{equation}
	From the definition of $\alpha_j$ we can write
	\begin{align}\label{Residue alpha j formula}
	\alpha_j = \Res_{\lambda= \lambda_j} M(\lambda) = \lim_{\lambda \to \lambda_j} \frac{(\lambda - \lambda_j)f(0,k)}{(k-k_j)\frac{d}{dk} f_h(k)} = \frac{2k_j f(0,k_j)}{\frac{d}{dk} f_h(k)|_{k=k_j}}.
	\end{align} 
	Plugging in \eqref{condition norm const}, we get
	\begin{align*}
	\alpha_j = \frac{2k_j (-2\ii k_j)}{f_h(-k_j)\frac{d}{dk} f_h(k)|_{k=k_j}}= - \frac{4 \ii k_j^2}{f_h(-k_j)\dot{f}_h(k_j)} = 4k_j^2 \left[ \frac{-i}{f_h(-k_j)\dot{f}_h(k_j)}\right] > 0
	\end{align*}
	where the last inequality follows from  \eqref{inequalities for the derivative of the Jost funct}  in Lemma \ref{Lemma norming constants} and $k_j^2$ being negative.
\end{proof}

The following theorem shows a representation formula for the Weyl function $M(\lambda)$, which can be reconstructed from the jumps $T(\lambda)$, the spectral normalizing constants $\alpha_j$ and the eigenvalues $\lambda_j$, as in \cite[Lemma 2.3.1]{FreilingYurko}.
\begin{theorem}\label{Weyl function and spectral data}
	The Weyl function is uniquely determined by the specification of the spectral data $(T(\lambda)$, $\left\lbrace \lambda_k, \alpha_k \right\rbrace_{k=1}^N)$ via the formula
	\begin{equation}\label{representation of weyl function}
	M(\lambda) = \int^{\infty}_{0} \frac{T(\mu)}{\lambda - \mu} d\mu + \sum_{k=1}^{N} \frac{\alpha_k}{\lambda - \lambda_k}, \quad \lambda \in \Pi \backslash \Lambda'.
	\end{equation}
	
\end{theorem}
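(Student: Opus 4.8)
The plan is to represent $M$ by a Cauchy-type (dispersion) integral over a contour that surrounds the whole spectrum, exploiting that on the physical sheet $\Pi$ (Definition \ref{Definition lambda plane}) the Weyl function $M(\lambda)=f(0,k)/f_h(k)$ is meromorphic with its only singularities the poles at $\lambda_1,\dots,\lambda_N\in\Lambda'$ (which lie on the negative real axis, since $k_j\in i\mathbb{R}_+$), while across the cut $[0,\infty)$ it has the prescribed jump $M^-(\mu)-M^+(\mu)=2\pi i\,T(\mu)$ (Definition \ref{Jumps of Weyl function}). Analyticity of $f(0,k)$ and $f_h(k)$ (Theorem \ref{Analyticity and continuity of Jost function and solution}), together with the fact that the only zeros of $f_h$ in $\Sigma$ producing elements of $\Lambda'$ are the $k_j$ (cf. Remark \ref{Remark zeros of Weyl}), gives that $M$ is analytic on $\Pi\setminus\Lambda'$, and the $\lambda_j$ are simple poles with residue $\alpha_j$ by Definition \ref{Spectral norming constant} (simplicity of the eigenvalues being recorded in Section \ref{intro}).

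First I would fix $\lambda\in\Pi\setminus\Lambda'$ and, for large $R$ and small $\epsilon>0$, integrate $z\mapsto M(z)/(z-\lambda)$ over the positively oriented boundary of the slit annulus $\Omega_{R,\epsilon}=\{\epsilon<|z|<R\}\setminus[0,R]$. The residue theorem then yields
\[
\frac{1}{2\pi i}\oint_{\partial\Omega_{R,\epsilon}}\frac{M(z)}{z-\lambda}\,dz \;=\; M(\lambda)+\sum_{|\lambda_j|<R}\frac{\alpha_j}{\lambda_j-\lambda}.
\]
The boundary decomposes into the outer circle $|z|=R$, the inner circle $|z|=\epsilon$, and the two edges of the slit; on the edges $z=\mu\pm i0$ one has $M(z)=M^\pm(\mu)$ (this is the identification used already in the proof of Proposition \ref{Relation between jump of the Weyl function with Jost function}), so, tracking orientations, the two edge integrals combine to $\int_\epsilon^R\frac{M^+(\mu)-M^-(\mu)}{\mu-\lambda}\,d\mu=2\pi i\int_\epsilon^R\frac{T(\mu)}{\lambda-\mu}\,d\mu$.

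It then remains to pass to the limits $R\to\infty$ and $\epsilon\to 0$. For the outer circle I would use the asymptotics $M(z)=(ik)^{-1}(1+o(1))$ of Lemma \ref{Lemma asympt exp of Weyl function}, valid uniformly on $|z|=R$ (where $|k|=\sqrt R$ and $\im k>0$) by the uniform bounds of Lemma \ref{uniform bounds on Jost function}: this gives $|M(z)/(z-\lambda)|=O(R^{-3/2})$, so the outer-circle integral is $O(R^{-1/2})\to 0$. For the inner circle, the bound $|f(0,k)-1|\le C$ near $k=0$ from \eqref{estimates on Jost solution} together with $k/f_h(k)=O(1)$ from Lemma \ref{proof 4} gives $M(z)=O(|z|^{-1/2})$ near $0$, whence the inner-circle contribution is $O(\epsilon^{1/2})\to 0$. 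The same estimates, plus $T(\mu)=k/(\pi|f_h(k)|^2)=O(\mu^{-1/2})$ as $\mu\to 0^+$ (by \eqref{Formula for the jump of Weyl function} and Lemma \ref{proof 4}) and $T(\mu)=O(\mu^{-1/2})$ as $\mu\to\infty$ (from \eqref{Weyl function second order}), show that $\int_0^\infty\frac{T(\mu)}{\lambda-\mu}\,d\mu$ converges absolutely. Collecting the terms and transposing the finite sum yields \eqref{representation of weyl function}.

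The main obstacle is the bookkeeping at the branch point $\lambda=0$: one must simultaneously verify that $M$ is mildly enough singular there that the inner circle drops out, and that the jump integral converges at its lower endpoint — both delivered by Lemma \ref{proof 4}. A secondary delicate point is the uniformity, over the full circle $|z|=R$ (all of which lies on the physical sheet $\im k>0$), of the large-$|k|$ behaviour of $M$; this is precisely what the uniform estimates of Lemma \ref{uniform bounds on Jost function} guarantee, and without them the vanishing of the outer-circle integral could not be asserted.
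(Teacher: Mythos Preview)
Your proof is correct and follows the same Cauchy-integral/contour-deformation approach as the paper. Your treatment is in fact slightly more careful: you excise the branch point $\lambda=0$ with an inner circle and invoke Lemma~\ref{proof 4} both to kill that contribution and to justify convergence of the $T$-integral at $\mu=0$, whereas the paper's proof runs the cut integrals to the origin without addressing this point.
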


\begin{proof}
	We follow the proof in \cite[Lemma 2.3.1]{FreilingYurko}.
	We consider the function 
	\begin{equation*}
	I_R (\lambda) := \frac{1}{2 \pi i} \int_{|\mu|=R} \frac{M(\mu)}{ \lambda - \mu} d\mu.
	\end{equation*}
	Since $M(\lambda)= O(k^{-1})$ for $k \to \infty$, then $ \lim_{R\to \infty} I_R (\lambda) =0$. Now, we deform the contour to avoid the singularity at $\mu=\lambda$ with the little circle $\gamma_r(\lambda)$ and to avoid the cut $\left]0,+\infty \right]$. Hence,
	
	\begin{align*}
	&\lim_{R \to 0}I_R (\lambda) = \lim_{r\to 0} \frac{1}{2 \pi i} \int_{\gamma_r(\lambda)} \frac{M(\mu)}{\lambda - \mu} d\mu + \lim_{\epsilon \to 0}\frac{1}{2 \pi i} \int^{0 - i\epsilon}_{+ \infty - i \epsilon} \frac{M(\mu)}{\lambda - \mu} d\mu \\ 
	&+ \lim_{\epsilon \to 0}\frac{1}{2 \pi i} \int_{0 + i\epsilon}^{+\infty + i \epsilon} \frac{M(\mu)}{\lambda - \mu} d\mu - 
	\frac{1}{2 \pi i} (2 \pi i) \sum_{k=1}^{m} \Res{\left(\frac{M(\mu)}{\lambda - \mu}\right)},
	\end{align*}
	where the last term is the sum of the residues of $\frac{M(\mu)}{\lambda - \mu}$ viewed as a function of $\mu$.
	In the first term we apply the residue theorem noticing that the little circle is run through in anti-clockwise direction; in the second term we make the substitution $\eta = \mu + i \epsilon$; in the third term we make the substitution $\eta = \mu - i \epsilon$, and in the last term we replace the residue of the Weyl function with $\alpha_k$ (see Definition \ref{Spectral norming constant}):
	\begin{align*}
	&0 = \frac{1}{2 \pi i} (-2 \pi i) \lim_{\mu \to \lambda} (\mu - \lambda) \frac{M(\mu)}{\lambda - \mu} + \lim_{\epsilon \to 0}\frac{1}{2 \pi i} \int_{+\infty }^{0} \frac{M(\eta - i \epsilon)}{\lambda - \eta + i \epsilon} d\eta \\
	&+ \lim_{\epsilon \to 0}\frac{1}{2 \pi i} \int_{0 }^{+\infty } \frac{M(\eta + i \epsilon)}{\lambda - \eta - i \epsilon} d\eta - \sum_{k=1}^{m} \frac{\alpha_k}{\lambda - \lambda_k}.
	\end{align*}
	Since $T(\eta) = \lim_{z\to0, \re z>0} \frac{1}{2 \pi i} \left( M(\eta - i z) - M(\eta + i z) \right) $, we can write:
	\begin{equation*}
	0= M(\lambda) + \int_{+\infty }^{0}  \frac{T(\eta)}{\lambda - \eta } d\eta - \sum_{k=1}^{m} \frac{\alpha_k}{\lambda - \lambda_k},
	\end{equation*}
	which is \eqref{representation of weyl function}.	
\end{proof}
We can write the Weyl function $M(\lambda)$ in terms of the jump function $T(\lambda)$ and the normalizing constants $\alpha_k$ through the formula
\begin{equation*}
M(\lambda) = \int^{\infty}_{0} \frac{T(\mu)}{\lambda - \mu} d\mu + \sum_{k=1}^{N} \frac{\alpha_k}{\lambda - \lambda_k}, \quad \lambda \in \Pi \backslash \Lambda'
\end{equation*}
as we can see in Theorem \ref{Weyl function and spectral data}. 
In order to reconstruct the Weyl function, we need to know the jump function, the eigenvalues and the normalizing constants.

\begin{remark}
	As we can see from Proposition \ref{Relation between jump of the Weyl function with Jost function} and Proposition \ref{Connection between norming constants}, we can retrieve uniquely the Weyl function from the Jost function $f_h$ and the eigenvalues $\left\lbrace k_j \right\rbrace_{1, \,... \,,N}$.
\end{remark}

We state below a theorem from \cite[Theorem 2.1.5]{FreilingYurko}, which proves the analyticity of $M(\lambda)$ in a certain region of the complex plane, which is related through Definition \ref{Weyl function definition} to the analyticity of the Jost function and the Jost solution (see Theorem \ref{Analyticity and continuity of Jost function and solution})
\begin{theorem}\label{Theorem poles of Weyl function}
	The Weyl function $M(\lambda)$ is analytic in $\Pi \backslash \Lambda'$ and continuous in  $\Pi_1 \backslash \Lambda$. The set of singularities of $M(\lambda)$ (as an analytic function) coincides with the set $\Lambda_0 = \left\lbrace \lambda : \lambda \geq 0 \right\rbrace \cup \Lambda$.
\end{theorem}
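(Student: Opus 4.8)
The plan is to deduce all three assertions from the factorization $M(\lambda)=f(0,k)/f_h(k)$ with $k=\sqrt\lambda$, using that $f(0,\cdot)$ and $f_h(\cdot)$ are entire (Theorem~\ref{Analyticity and continuity of Jost function and solution}) and that $k=\sqrt\lambda$ is holomorphic on $\Pi$, mapping $\Pi$ onto $\{\im k>0\}$ and, continuously, $\Pi_1$ onto $\Sigma$ (this is built into Definition~\ref{Definition lambda plane}). A quotient of holomorphic functions is holomorphic away from the zeros of its denominator, so $M$ is analytic on $\Pi$ except where $f_h(\sqrt\lambda)=0$ with $\im\sqrt\lambda>0$, i.e. except on $\Lambda'$, and continuous on $\Pi_1$ except where $f_h(\sqrt\lambda)=0$ with $\sqrt\lambda\in\Sigma$, i.e. except on $\Lambda$. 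Since the problem is self-adjoint, $\overline{f_h(k)}=f_h(-k)$, so a real zero $k_0\neq0$ of $f_h$ would also be a zero of $f_h(-\cdot)$, and the Wronskian identity $-2ik=f(0,k)f_h(-k)-f_h(k)f(0,-k)$ established in the proof of Lemma~\ref{proof 4} would give $-2ik_0=0$; hence $f_h$ has no nonzero real zeros and in fact $\Lambda=\Lambda'$. This would give the first two assertions.

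For the singular set, the plan is to show that $\Lambda_0=\{\lambda\geq0\}\cup\Lambda$ consists exactly of the points where $M$ fails to be analytic; by the previous paragraph $M$ is analytic off $\Lambda_0$, so only the reverse inclusion needs work, and I would split it into three cases. At $\lambda_j\in\Lambda'$ we have $f_h(k_j)=0$ while $f(0,k_j)\neq0$ by Remark~\ref{Remark zeros of Weyl} (if $f(0,k_j)$ vanished too, then so would $f'(0,k_j)$ since $f_h(k_j)=0$, forcing $f(\cdot,k_j)\equiv0$ by ODE uniqueness and contradicting \eqref{Jost solution condition}), so $M$ has a genuine pole at $\lambda_j$. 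At a point $\lambda=a>0$ the two one-sided boundary values correspond to $k=\pm\sqrt a$, and their difference $M^-(a)-M^+(a)=2\pi i\,T(a)=2ik/|f_h(k)|^2$ with $k=\sqrt a>0$ is nonzero by Proposition~\ref{Relation between jump of the Weyl function with Jost function}; hence $M$ is discontinuous, and so not analytic, at every point of $(0,\infty)$.

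The remaining point $\lambda=0$ is where I expect the real work to be, since $0\in\Lambda_0$ but $0\notin\Pi_1$ and one must show the branch point of $\sqrt\lambda$ survives in $M$. The inputs are Lemma~\ref{proof 4}, giving $k/f_h(k)=O(1)$ as $k\to0$ with $\im k\geq0$, and the normalization obtained by differentiating the above Wronskian identity at $k=0$, namely $f(0,0)\,\dot{f}_h(0)-\dot{f}(0,0)\,f_h(0)=i$ (dots denoting $\partial_k$), which in particular prevents $f(0,0)$ and $f_h(0)$ from both vanishing. If $f_h(0)\neq0$, this identity gives $\tfrac{d}{dk}\big(f(0,k)/f_h(k)\big)\big|_{k=0}=-i/f_h(0)^2\neq0$, so $M(\lambda)=M(0)-\tfrac{i}{f_h(0)^2}\sqrt\lambda+O(\lambda)$ has a genuine square-root branch point at $\lambda=0$; if $f_h(0)=0$, then Lemma~\ref{proof 4} forces this zero to be simple and $f(0,0)=i/\dot{f}_h(0)\neq0$, so $M(\lambda)\sim\big(i/\dot{f}_h(0)^2\big)\lambda^{-1/2}$ is unbounded near $0$. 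Either way $0$ is a singularity of $M$, so the singular set is precisely $\Lambda_0$, and cross-checking against the representation \eqref{representation of weyl function} of Theorem~\ref{Weyl function and spectral data} confirms the pole/branch-cut structure; this is the content of the theorem (cf. \cite[Theorem~2.1.5]{FreilingYurko} adapted to the Robin setting).
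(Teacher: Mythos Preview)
Your argument is correct and complete. The paper itself does not supply a proof of this theorem: it is stated with a citation to \cite[Theorem~2.1.5]{FreilingYurko}, accompanied only by the remark that analyticity of $M(\lambda)$ is ``related through Definition~\ref{Weyl function definition} to the analyticity of the Jost function and the Jost solution.'' Your proof makes that remark precise and goes further.

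A few comments on what you add beyond the paper's statement. Your observation that $\Lambda=\Lambda'$ (no nonzero real zeros of $f_h$) via the Wronskian identity is a useful sharpening, though not strictly required for the theorem as phrased. Your treatment of the branch point at $\lambda=0$ is more detailed than anything in the paper: the identity $f(0,0)\dot f_h(0)-\dot f(0,0)f_h(0)=i$ obtained by differentiating the Wronskian relation, and the resulting dichotomy ($f_h(0)\neq0$ giving a $\sqrt\lambda$ term with nonzero coefficient, $f_h(0)=0$ giving a $\lambda^{-1/2}$ blow-up), is exactly the right way to confirm that $0$ is genuinely singular. The inputs you cite (Remark~\ref{Remark zeros of Weyl}, Proposition~\ref{Relation between jump of the Weyl function with Jost function}, Lemma~\ref{proof 4}) all precede this theorem in the paper, so there is no circularity.
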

Next, we prove a uniqueness result for the Weyl function (see \cite[Theorem 2.2.1]{FreilingYurko}).
\begin{theorem}[Uniqueness]\label{Uniqueness}
	Let $V$ and $\tilde{V}$ be in $\mathbb{V}^1_{x_I}$ with Weyl functions $M$ and  $\tilde{M}$ respectively. If $M=\tilde{M}$, then $V = \tilde{V}$.
\end{theorem}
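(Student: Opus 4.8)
The plan is to follow the classical Borg--Marchenko strategy adapted to the non-self-adjoint Weyl formalism of \cite[Chapter 2]{FreilingYurko}, using the contour-integral argument built on the Weyl solution $\phi(x,\lambda)$ together with a second \enquote{tilded} copy of all objects. First I would introduce, for the potential $\tilde V$, the tilded Jost solution $\tilde f$, Jost function $\tilde f_h$, regular solution $\tilde\varphi$, Weyl solution $\tilde\phi = \tilde\theta + \tilde M\tilde\varphi$, and form the cross Wronskian-type objects
\begin{align*}
P_1(x,\lambda) &= \varphi(x,k)\,\tilde\phi'(x,\lambda) - \phi'(x,\lambda)\,\tilde\varphi(x,k), \\
P_2(x,\lambda) &= \phi(x,\lambda)\,\tilde\varphi(x,k) - \varphi(x,k)\,\tilde\phi(x,\lambda),
\end{align*}
which are exactly the matrix entries used in \cite[Theorem 2.2.1]{FreilingYurko}. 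The hypothesis $M=\tilde M$ enters here: since $\phi = \theta + M\varphi$ and $\tilde\phi = \tilde\theta + M\tilde\varphi$ (same $M$), one rewrites $P_1,P_2$ purely in terms of $\varphi,\theta,\tilde\varphi,\tilde\theta$, which shows that for each fixed $x$ the functions $P_1(x,\cdot)$ and $P_2(x,\cdot)$ are entire in $\lambda$ (the potential singularity at the poles $\Lambda'$ and the cut $[0,\infty)$ cancels, because both $\phi$ and $\tilde\phi$ carry the same $M(\lambda)$).

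Next I would establish that $P_1$ and $P_2$ are bounded in $\lambda$ uniformly on compact $x$-sets. For this I use the asymptotics \eqref{asymptotics on phi wrt to k} for $\phi$ (and its tilded analogue) together with the estimate \eqref{asymptotic estimates on general solution} for $\varphi^{(\nu)}$ and the analogous bound for $\theta^{(\nu)}$: combining $|\varphi^{(\nu)}|,|\theta^{(\nu)}| \le C|k|^\nu e^{|\eta|x}$ with $\phi^{(\nu)} = O(|k|^{\nu-1} e^{-\eta x})$ as $|k|\to\infty$, the products in $P_1,P_2$ stay bounded (the exponential factors $e^{|\eta|x}e^{-\eta x}$ are controlled on the physical sheet $\Sigma$, and for $|k|$ small one uses continuity of the Weyl solution, invoking Lemma \ref{proof 4} to control $f(x,k)/f_h(k)$ near $k=0$). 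An entire function of $\lambda$ that is bounded is constant by Liouville, so $P_1(x,\lambda) \equiv P_1(x)$ and $P_2(x,\lambda) \equiv P_2(x)$ are independent of $\lambda$; letting $|k|\to\infty$ and reading off the leading term in the asymptotic expansions forces $P_1(x) \equiv 1$ and $P_2(x) \equiv 0$ for all $x \ge 0$.

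From $P_2 \equiv 0$, i.e. $\phi(x,\lambda)\tilde\varphi(x,k) = \varphi(x,k)\tilde\phi(x,\lambda)$, together with $P_1\equiv 1$, i.e. $\varphi(x,k)\tilde\phi'(x,\lambda) - \phi'(x,\lambda)\tilde\varphi(x,k) = 1$, and the Wronskian normalization \eqref{Wronskian between gen sol and weyl sol}, one solves the linear $2\times 2$ system to get $\varphi(x,k) = \tilde\varphi(x,k)$ and $\phi(x,\lambda) = \tilde\phi(x,\lambda)$ for all $x$ and all admissible $\lambda$. Since $\varphi$ and $\tilde\varphi$ are solutions of $-u''+Vu=\lambda u$ and $-u''+\tilde Vu=\lambda u$ respectively with the same Cauchy data \eqref{condition gen sol}, subtracting the two equations gives $(V-\tilde V)\varphi \equiv 0$; because $\varphi(x,k)$ is not identically zero in $x$ for generic $k$ (indeed $\varphi(0,k)=1$, and $\varphi(\cdot,k)$ has only isolated zeros), it follows that $V = \tilde V$ a.e.\ on $\mathbb{R}_+$, which is the claim.

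The main obstacle I anticipate is the boundedness step: one must verify carefully that the cancellation of the $M(\lambda)$-induced singularities makes $P_1,P_2$ genuinely entire (not merely meromorphic) and simultaneously bounded across the cut $[0,\infty)$ and near $\lambda=0$, where the naive asymptotics degenerate. This is precisely where the continuity statements of Theorem \ref{Theorem poles of Weyl function} and the small-$k$ control from Lemma \ref{proof 4} are needed, and where the argument would break down for a potential outside $\mathbb{V}^1_{x_I}$ (e.g.\ without the compact support / $L^1$ conditions the requisite uniform estimates \eqref{asymptotic estimates on general solution} and \eqref{asymptotics on phi wrt to k} fail).
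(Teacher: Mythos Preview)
Your approach is essentially identical to the paper's, which also follows \cite[Theorem 2.2.1]{FreilingYurko}: form the transformation-matrix entries, use $M=\tilde M$ to see they are entire in $\lambda$, apply Liouville via the large-$|k|$ asymptotics to get $P_{11}\equiv 1$, $P_{12}\equiv 0$, and conclude $\varphi\equiv\tilde\varphi$, $\phi\equiv\tilde\phi$, hence $V=\tilde V$.

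Two small corrections are worth noting. First, your formula for $P_1$ has the derivatives misplaced: it must be $P_{11}=\varphi\,\tilde\phi'-\phi\,\tilde\varphi'$ (as in the paper and in Freiling--Yurko), not $\varphi\,\tilde\phi'-\phi'\,\tilde\varphi$. With your version, substituting $\phi=\theta+M\varphi$ and $\tilde\phi=\tilde\theta+M\tilde\varphi$ leaves a residual term $M\cdot W(\varphi,\tilde\varphi)$, and this Wronskian is \emph{not} identically zero for $x>0$ (since $\varphi$ and $\tilde\varphi$ solve equations with different potentials), so the poles of $M$ would not cancel. Second, once entireness is established the small-$k$ and across-the-cut worries disappear: the functions $\varphi,\theta,\tilde\varphi,\tilde\theta$ are entire in $\lambda$ for each fixed $x$, so no appeal to Lemma \ref{proof 4} or Theorem \ref{Theorem poles of Weyl function} is needed---only the large-$|k|$ bound \eqref{eq num 199} matters for the Liouville step.
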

\begin{proof}
	We closely follow  the proof given in \cite{FreilingYurko}.
	We define the matrix $P(x,\lambda)=\left[ P_{j,k=1,2}\right]$ by the formula
	\begin{equation}\label{Matrix equation 1}
	P(x,\lambda)
	\begin{bmatrix}
	\tilde{\varphi}(x,\lambda)      & \tilde{\phi}(x,\lambda)\\
	\tilde{\varphi}'(x,\lambda)     & \tilde{\phi}'(x,\lambda)
	\end{bmatrix}
	= 
	\begin{bmatrix}
	\varphi(x,\lambda)    &\phi(x,\lambda) \\
	
	\varphi'(x,\lambda)     & \phi'(x,\lambda)
	\end{bmatrix}.
	\end{equation}
	
	Multiplying both sides of the equation by the inverse of the matrix of the left-hand side we get
	\[
	P(x,\lambda) =
	\begin{bmatrix}
	\varphi(x,\lambda)    &\phi(x,\lambda) \\
	\varphi'(x,\lambda)     & \phi'(x,\lambda)
	\end{bmatrix}
	\frac{1}{W(\tilde{\varphi}, \tilde{\phi})}
	\begin{bmatrix}
	\tilde{\phi}'(x,\lambda)     & -\tilde{\phi}(x,\lambda)\\
	-\tilde{\varphi}'(x,\lambda)     & \tilde{\varphi}(x,\lambda) 
	\end{bmatrix}.
	\]
	Since the Wronskian $W(\tilde{\varphi}, \tilde{\phi})=1$ because of \eqref{Wronskian between gen sol and weyl sol}, we can multiply the two matrices and recover the components of the matrix $P(x,\lambda)$ 
	\begin{align}\label{components of matrix P}
	&P_{j1}(x,\lambda) = \varphi^{(j-1)}(x,\lambda) \tilde{\phi}'(x,\lambda) - \phi^{(j-1)}(x,\lambda) \tilde{\varphi}'(x,\lambda) \nonumber \\
	&P_{j2}(x,\lambda) = \phi^{(j-1)}(x,\lambda) \tilde{\varphi}(x,\lambda) - \varphi^{(j-1)}(x,\lambda) \tilde{\phi}(x,\lambda).
	\end{align}
	Solving \eqref{Matrix equation 1} with respect to $\phi$ and $\varphi$ we get
	\begin{align}\label{eq num 196}
	&\varphi(x,\lambda) = P_{11}(x,\lambda) \tilde{\varphi}(x,\lambda) + P_{12}(x,\lambda) \tilde{\varphi}'(x,\lambda) \nonumber \\
	&\phi(x,\lambda) = P_{11}(x,\lambda) \tilde{\phi}(x,\lambda) + P_{12}(x,\lambda) \tilde{\phi}'(x,\lambda). 
	\end{align}
	From \eqref{asymptotics on phi wrt to k} and \eqref{asymptotic estimates on general solution}, for $|\lambda| \to \infty$ we get
	\begin{equation}\label{eq num 199}
	|P_{11}(x,\lambda) -1| \leq \frac{C}{|k|}, \qquad |P_{12}(x,\lambda)| \leq \frac{C}{|k|}, \quad |k|\to \infty.
	\end{equation}
	From \eqref{components of matrix P}, plugging the definition of the Weyl solution $\phi(x,\lambda)$ and $\tilde{\phi}(x,\lambda)$, as in  \eqref{Weyl solution formula}, we get
	\begin{align*}
	&P_{11} = \varphi(x,\lambda) \tilde{\theta}'(x,\lambda) - \theta(x,\lambda) \tilde{\varphi}'(x,\lambda) + (\tilde{M}(\lambda) - M(\lambda) ) \varphi(x,\lambda) \tilde{\varphi}'(x,\lambda) \\
	& P_{12} =  \theta(x,\lambda) \tilde{\varphi}(x,\lambda)  - \varphi(x,\lambda) \tilde{\theta}(x,\lambda)   + (\tilde{M}(\lambda) - M(\lambda) ) \varphi(x,\lambda) \tilde{\varphi}(x,\lambda).
	\end{align*}
	So, if $M(\lambda) \equiv \tilde{M}(\lambda)$, then for each fixed $x$, the functions $P_{11} (x,\lambda)$ and $P_{12} (x,\lambda)$ are entire in $\lambda$. The estimates \eqref{eq num 199} yield $P_{11} (x,\lambda)\equiv 1$ and $P_{12} (x,\lambda)\equiv 0$. Substituting this into \eqref{eq num 196} we get $\varphi(x,\lambda) \equiv \tilde{\varphi}(x,\lambda)$ and $\phi(x,\lambda) \equiv \tilde{\phi}(x,\lambda)$ for all $x$ and $\lambda$, then $V= \tilde{V}$.
\end{proof}
\begin{remark}
	Theorem \ref{Uniqueness} can be found in the literature in the case of Dirichlet boundary condition under the name of \textit{Borg-Marchenko uniqueness theorem} (see \cite{Borg}). The converse of it was proved in a local version by Barry Simon (see \cite{Simon1999} but also \cite{rybkin, Gesztesy, Bennewitz}) employing the Phragmen-Lindel{\"o}f theorem and Liouville theorem, under the assumption that if two Weyl functions asymptotically agree modulo an exponentially small function, then the two potential agree in a certain interval.
\end{remark}

\subsection{The main equation of the inverse spectral problem}\label{Gelfand Levitan section}

In this section we show an alternative way (similar to \cite{Bealsetal1995} for the Rayleigh case) to recover the Gelfand--Levitan equation, that is an integral equation from which we can reconstruct the potential $V$ and the boundary coefficient $h$ of a Schr{\"o}dinger boundary value problem. The ordinary way to obtain the Gelfand--Levitan equation is from the spectral measure. Here we obtain it through a function $\psi$, that depends on the Weyl function and is discontinuous on the real line with jumps proportional to the jumps of the Weyl function. We are motivated by the fact that in the Rayleigh problem we are not able to recover the Gelfand--Levitan equation through the spectral measure as the operator is not self-adjoint, even though we will not extend the following to the Rayleigh problem.

We recall Definition \ref{Weyl solution definition} and Definition \ref{Weyl function definition}. We define $\phi_{\pm}(x,\lambda) $ as
\begin{equation*}
\phi_{\pm}(x,\lambda) = \frac{f(x,\pm k)}{f_h(\pm k)} , \qq \im k > 0.
\end{equation*}
Note that for $\lambda>0$ we have (see also Definition \ref{jumps of weyl})
\begin{align*}
M^{\pm} (\lambda)= \lim_{z\to0, \re z>0}  \frac{f(0,\sqrt{\lambda \pm iz})}{f_h(\sqrt{\lambda \pm iz})} = \frac{f(0,\pm k)}{f_h(\pm k)}, \qq \lambda=k^2,\quad  k>0.
\end{align*}
We extend the definition of $M^{\pm}(\lambda)$ to $\lambda \in \mathbb{C}$ and note that $M^{\pm}(\lambda) = M(\lambda)$ for $\lambda \notin \left[0,\infty\right)$. In particular, it is easy to check that 
\begin{align*}
M^{\pm}(\lambda) = \frac{f(0,\pm k)}{f_h(\pm k)}, \qq \im k > 0.
\end{align*}

From \eqref{first eq var} we can find the asymptotics, as $|k|\to\infty$, in the upper half plane $\Im k>0$ for the regular solution $\varphi (x,k)$
\begin{align*}
\varphi (x,k) &= \cos kx - h \frac{\sin kx}{k} + \int_{0}^{x} \sin \left[k(x-t)\right] V(t) \varphi(t,k) dt  \\
&= \left(\frac{e^{ikx} + e^{-ikx}}{2}\right) -h \left(\frac{e^{ikx} - e^{-ikx}}{2ik}\right) + \int_{0}^{x} \sin \left(k (x-t)\right) \cos kt V(t) dt \\
&\phantom{=\;}- h \int_{0}^{x} \frac{\sin \left(k (x-t)\right)}{k} V(t) \sin kt \,dt  \\
&= e^{-ikx} \left( \frac{1}{2} +  \int_{0}^{x} \frac{e^{2ik(x-t)} - 1}{4i} V(t) dt + O\left( \frac{1}{ik} \right)  \right).
\end{align*}
We can do the same for the Jost solution $f(x,k)$ and find its asymptotics as $|k|\to \infty$ in the physical sheet, starting from \eqref{Volterra-type eq for f}
\begin{align*} 
f(x,k) &= e^{ikx} - \int_{x}^{\infty} \frac{e^{ik(x-t)} - e^{-ik(x-t)}}{2ik} V(t) e^{ikt} dt + o\left(\frac{1}{k}\right)  = e^{ikx} \left(1 - \int_{x}^{\infty} \frac{V(t)}{2ik} dt + o\left(\frac{1}{k}\right)  \right)
\end{align*}  
and hence
\begin{equation*}
f(x,-k)  = e^{-ikx} \left(1 + \int_{x}^{\infty} \frac{V(t)}{2ik}   dt + o\left(\frac{1}{k}\right) \right).
\end{equation*}
From Lemma \ref{Lemma asympt exp of Weyl function}, we have the following asymptotic expansion of $M(\lambda)$ 
\begin{equation}\label{Expansion of M}
M(\lambda)=  \frac{1}{\ii k} +\frac{1}{k^2} \left[h- \hat{V}(k)\right] + o(k^{-2}), \qq |k| \to +\infty.
\end{equation}
If $V' \in L^1\left(0,\infty\right)$, then we can integrate by parts the Fourier transform of $V$ and get
\begin{align*}
\hat{V}(k) = - \frac{V(0)}{2 \ii k} - \int_{0}^{x_I} \frac{V'(t)}{2 \ii k} e^{2 \ii kt} dt,
\end{align*}
and \eqref{Expansion of M} becomes
\begin{align*}
M(\lambda)=  \frac{1}{\ii k} +\frac{1}{k^2} \left[h- V(0)\right] + o(k^{-2}), \qq |k| \to + \infty. 
\end{align*}
The difference between $M^+(\lambda)$ and $M^-(\lambda)$ is
\begin{align}\label{difference of Weyl functions}
& \frac{2}{\ii k}  = M^+(\lambda) - M^-(\lambda) + o(k^{-2}).
\end{align}
\begin{definition}\label{Function psi}
	We define the function $\psi(x,k)$ discontinuous in the real line as
	\begin{align*}
	\psi (x,k) = 
	\begin{cases}
	-\ii k e^{ikx}\left(\phi_+(x,\lambda) +\frac{2i}{k} \varphi(x,k)  \right) \qquad &\Im k>0\\
	-\ii k e^{ikx} \phi_- (x,\lambda)     &\Im k<0
	\end{cases}
	\end{align*}
	and let $\psi_+(x,k)$ denote the restriction of $\psi (x,k)$ to the upper-half plane, and $\psi_-(x,k)$ the restriction of $\psi (x,k)$ to the lower-half plane. 
\end{definition}
One can compare Definition \ref{Function psi} with \cite[Formula 3.8]{Bealsetal1995} for the Rayleigh case.
We can see that the function $\psi$ is bounded on  $\mathbb{C}$.
We can also write the general solution $\varphi (x,k)$ in terms of $\psi_+$ and $\psi_-$ as
\begin{equation*}
2 \varphi (x,k) = e^{-ikx} \psi_+ (x,k) + e^{ikx} \psi_- (x,-k).
\end{equation*}
Since $\varphi$ is an even function of $k$, we also have
\begin{equation}\label{varphi formula}
2 \varphi (x,k) = e^{ikx} \psi_+ (x,-k) + e^{-ikx} \psi_- (x,k)
\end{equation}
and adding these last two we get
\begin{equation*}
4 \varphi (x,k) = e^{ikx} \left( \psi_+ (x,-k)  - \psi_- (x,-k)  \right)  + e^{-ikx} \left( \psi_+ (x,k) - \psi_- (x,k)  \right).
\end{equation*}
From \eqref{Weyl solution formula}, we see that 
\begin{align}\label{Difference of Weyl solutions}
 \phi_+(x,\lambda) - \phi_-(x,\lambda) &= \theta(x,k) + M^+(\lambda)  \varphi (x,k) - \theta(x,k) -  M^-(\lambda)\varphi (x,k) \nonumber  \\
& = \varphi (x,k) \left( M^+(\lambda) - M^-(\lambda)  \right).
\end{align}
Using \eqref{Difference of Weyl solutions} and \eqref{difference of Weyl functions} we can calculate
\begin{align}\label{psi+ meno psi-}
\psi_+(x,k) - \psi_-(x,k) &= e^{ikx} \left( \phi_+(x,\lambda) - \phi_-(x,\lambda) -\frac{2}{\ii k} \varphi (x,k) \right) (-\ii k) \nonumber \\
&= e^{ikx} \varphi (x,k) \left( M^+(\lambda) - M^-(\lambda) -\frac{2}{i k} \right) (-\ii k)  \nonumber \\
&= -ik e^{ikx} \varphi (x,k)  \left( j(k) - j(-k)  \right),
\end{align}
where $j(k)$ is of order $O(k^{-2})$, as is the coefficient of the second leading order of $M(\lambda)$, and it is defined as
\begin{equation*}
j(\pm k) :=    M^{\pm}(\lambda) \mp \frac{1}{\ii k}, \qq \lambda=k^2, \quad \im k>0,
\end{equation*}
and
\begin{equation*}
j(\pm k) :=    M^{\pm}(\lambda) \mp \frac{1}{\ii k}, \qq \lambda=k^2, \quad k>0.
\end{equation*}
The difference $j(k) - j(-k) $ is 
\begin{equation*}
j(k) - j(-k) =  
\begin{cases}
-\frac{2}{\ii k} \qq \qq &\im k>0, \\
M^+(\lambda) - M^-(\lambda) -\frac{2}{\ii k} & k>0,
\end{cases}
\end{equation*}
and it is of order $O(k^{-1})$.
Now, we want to calculate the asymptotics for  $\psi_-$:
\begin{align}\label{Asymptotics of psi-}
 \psi_-(x,k) &= -\ii k e^{ikx} \phi_-(x,\lambda)  = (-\ii k) e^{ikx} \frac{f(x,-k)}{f(0,-k)} M^-(\lambda) \nonumber  \\
& =(-ik) e^{ikx} \; \frac{e^{-ikx}\left( 1 + \int_{x}^{x_I} \frac{V(t)}{2ik} dt +o(k^{-1})\right) }{ 1 + \int_{0}^{x_I} \frac{V(t)}{2ik} dt +o(k^{-1})} \frac{1}{-ik}\left( 1 + \frac{h}{ik}  - \frac{\hat{V}(-k)}{ik} + o(k^{-1}) \right) \nonumber \\
& = 1 + \int_{x}^{x_I} \frac{V(t)}{2ik} dt  + \frac{h}{ik} - \int_{0}^{x_I} \frac{V(t)}{2ik} dt - \frac{\hat{V}(-k)}{ik} + o(k^{-1})  \nonumber  \\
& =1 - \int_{0}^{x} \frac{V(t)}{2ik} dt + \frac{h}{ik} + o(k^{-1})
\end{align}
where we used that
\begin{equation*}
\frac{f(x,k)}{f(0,k)} = e^{ikx}  \frac{\left( 1 - \int_{x}^{\infty}  \frac{V(t)}{2ik} +o(k^{-1}) \right)}{\left( 1 - \int_{0}^{\infty}  \frac{V(t)}{2ik} +o(k^{-1}) \right)}.
\end{equation*}


In the following proposition (see \cite[Proposition 4]{Bealsetal1995} for the Rayleigh case) we represent the function $\psi(x,k) $ in terms of some coefficients of the asymptotcs of the Weyl function $M(\lambda)$ and its residues. 
\begin{proposition}
	The function $\psi(x,k)$ satisfies
	\begin{align}\label{representation of psi prop 3}
	\psi(x,k) &= 1 - \frac{1}{2 \pi } \int_{-\infty}^{\infty} \frac{k' e^{ik'x} \varphi(x,k') \left(j(k') - j(-k' ) \right)}{k'-k} dk' \nonumber \\
	&\phantom{=\;} + \sum_{j=1}^N  \frac{\alpha_j}{2i(k_j + k)} e^{-ik_j x} \varphi(x,k_j)  +  \sum_{j=1}^N \frac{\alpha_j}{2i(k - k_j)} e^{ik_j x} \varphi(x,k_j).
	\end{align}
	Moreover, the limit value $\psi_{\pm} (x,k) = \psi(x, k \pm i 0)$ determines $\varphi$ in \eqref{Weyl solution formula} by
	\begin{equation}\label{varphi from boundary values}
	2 \varphi (x,k) = e^{ikx} \psi_+ (x,-k) + e^{-ikx} \psi_- (x,k)
	\end{equation}
\end{proposition}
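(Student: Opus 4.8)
\emph{Proof plan.} The idea is to treat $g(x,\cdot):=\psi(x,\cdot)-1$, for fixed $x$, as a sectionally analytic function of $k$ and to recover it by a Cauchy-type integral that accounts simultaneously for its jump across the real axis and for its finitely many poles.

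First I would pin down the analytic structure of $g$. By Definition \ref{Function psi}, in the open upper half-plane $\psi=\psi_+=-ike^{ikx}\phi_+ + 2e^{ikx}\varphi$ (indeed $-ike^{ikx}\cdot\tfrac{2i}{k}\varphi=2e^{ikx}\varphi$), which, since $e^{ikx}$ and $\varphi$ are entire and $\phi_+=f(\cdot,k)/f_h(k)$, is analytic except for simple poles at the zeros $k_1,\dots,k_N$ of $f_h$ in $\Sigma$; symmetrically $\psi=\psi_-=-ike^{ikx}\phi_-$ is analytic in the open lower half-plane apart from simple poles at $-k_1,\dots,-k_N$. Using $\Res_{k=k_j}\phi_+(x,\lambda)=f(x,k_j)/\dot{f}_h(k_j)$, the identity $\varphi(x,k_j)=-\tfrac{1}{2ik_j}f_h(-k_j)f(x,k_j)$ from the proof of Proposition \ref{Connection between norming constants}, and the formula \eqref{formula spectral norming constant} for $\alpha_j$, a short computation gives $\Res_{k=k_j}\psi(x,k)=\tfrac{\alpha_j}{2i}e^{ik_jx}\varphi(x,k_j)$ and $\Res_{k=-k_j}\psi(x,k)=\tfrac{\alpha_j}{2i}e^{-ik_jx}\varphi(x,k_j)$. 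I would also record that $g(x,k)=O(|k|^{-1})$ as $|k|\to\infty$, uniformly in $\arg k$: this follows from \eqref{Asymptotics of psi-} together with the analogous expansion for $\psi_+$, where the term $\tfrac{2i}{k}\varphi$ in Definition \ref{Function psi} is precisely what cancels the leading $\tfrac{1}{ik}$ of $M^+$ against $\phi_+$ via \eqref{Weyl solution formula} and Lemma \ref{Lemma asympt exp of Weyl function}, plus the boundedness of $\psi$ near $k=0$ coming from Lemma \ref{proof 4}.

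Next comes the contour argument. Fix $k$ off the real axis, write $g_\pm(x,k')=\psi_\pm(x,k')-1$ for the boundary values on $\mathbb{R}$, and apply the residue theorem to $k'\mapsto g(x,k')/(k'-k)$ over the upper half-disc $\{|k'|<R,\ \im k'>0\}$: this yields $\tfrac{1}{2\pi i}\int_{-R}^{R}\tfrac{g_+(x,k')}{k'-k}\,dk'$ plus an upper-semicircle term, equal to the residue at $k$ (present only if $k$ lies in this half-plane) plus $\sum_j\Res_{k'=k_j}\!\big(g(x,k')/(k'-k)\big)$; doing the same over the lower half-disc yields $-\tfrac{1}{2\pi i}\int_{-R}^{R}\tfrac{g_-(x,k')}{k'-k}\,dk'$ plus a lower-semicircle term, equal to the residue at $k$ (present only if $k$ lies in the lower half-plane) plus $\sum_j\Res_{k'=-k_j}\!\big(g(x,k')/(k'-k)\big)$. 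Adding the two identities, the boundary integrals combine into $\tfrac{1}{2\pi i}\int_{-R}^{R}\tfrac{g_+(x,k')-g_-(x,k')}{k'-k}\,dk'$ and exactly one of the two $k$-contributions equals $g(x,k)$; letting $R\to\infty$, the $O(|k|^{-1})$ decay kills both semicircle terms and makes the line integral absolutely convergent, so $g(x,k)=\tfrac{1}{2\pi i}\int_{-\infty}^{\infty}\tfrac{g_+(x,k')-g_-(x,k')}{k'-k}\,dk'+\sum_j\tfrac{\Res_{k=k_j}\psi}{k-k_j}+\sum_j\tfrac{\Res_{k=-k_j}\psi}{k+k_j}$.

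Finally I substitute the jump formula \eqref{psi+ meno psi-}, $\psi_+(x,k')-\psi_-(x,k')=-ik'e^{ik'x}\varphi(x,k')\big(j(k')-j(-k')\big)$, which turns $\tfrac{1}{2\pi i}\int\tfrac{\psi_+-\psi_-}{k'-k}\,dk'$ into $-\tfrac{1}{2\pi}\int\tfrac{k'e^{ik'x}\varphi(x,k')(j(k')-j(-k'))}{k'-k}\,dk'$, insert the residues from the first step, and move the constant $1$ back to the right-hand side; this is exactly \eqref{representation of psi prop 3}. The second assertion is then immediate: \eqref{varphi from boundary values} is the identity \eqref{varphi formula} already derived before the Proposition (by averaging the two representations of $2\varphi$ in terms of $\psi_\pm$ and using that $\varphi$ is even in $k$), so the boundary values $\psi_\pm(x,\cdot)$ indeed determine $\varphi$. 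I expect the main obstacle to be the uniform decay claim $g(x,k)=O(|k|^{-1})$ on the large semicircles: one must check that the asymptotic expansions of $\psi_\pm$ hold uniformly in $\arg k$ (not merely along rays), for which the uniform bounds of Lemma \ref{uniform bounds on Jost function} and the uniform estimate \eqref{asymptotic estimates on general solution} for $\varphi$ are the key tools, and one must also verify absolute convergence of the line integral near $k'=0$, where the boundedness of $\psi$ provided by Lemma \ref{proof 4} is what is needed.
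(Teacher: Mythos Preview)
Your proposal is correct and follows essentially the same route as the paper: both arguments treat $\psi(x,\cdot)-1$ as a sectionally holomorphic function with $O(|k|^{-1})$ decay, close contours in the upper and lower half-planes, collect the residues at $\pm k_j$ and at $k'=k$, let the radius go to infinity, and then insert the jump formula \eqref{psi+ meno psi-}. The only cosmetic differences are that the paper shifts the real segments by $\pm i\epsilon$ before closing the contours and expresses the pole contributions through $\Res_{k'=\pm k_j}M^{\pm}(\lambda)$ together with $\alpha_j=2k_j\Res_{k'=k_j}M^{+}(\lambda)=-2k_j\Res_{k'=-k_j}M^{-}(\lambda)$, whereas you compute the residues of $\psi$ directly via $\varphi(x,k_j)=-\tfrac{1}{2ik_j}f_h(-k_j)f(x,k_j)$ and \eqref{formula spectral norming constant}; your extra care about uniform decay on the arcs and integrability near $k'=0$ is a point the paper passes over with a single appeal to Jordan's lemma.
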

\begin{proof}
	We consider
	\begin{equation*}
	\frac{1}{2 \pi i} \int_{-R}^{R} \frac{- \psi_+(k') + \psi_- (k') }{k'-k} dk' 
	\end{equation*}
	which can be written as
	\begin{align*}
	& - \frac{1}{2 \pi i} \int_{-R}^{R} \frac{ \psi_+(k') - 1 }{k'-k} dk' + \frac{1}{2 \pi i} \int_{-R}^{R} \frac{ \psi_- (k') -1 }{k'-k} dk'   \\
	& = \lim_{\epsilon \to 0^+}  \left(  - \frac{1}{2 \pi i} \int_{-R+i \epsilon}^{R + i\epsilon} \frac{ \psi_+(k') -1 }{k'-k} dk' - \frac{1}{2 \pi i} \int_{R - i\epsilon}^{-R - i \epsilon} \frac{ \psi_- (k') -1}{k'-k} dk'  \right).
	\end{align*}
	\begin{figure}[ht]
		\centering
		\includegraphics[width=0.5\columnwidth]{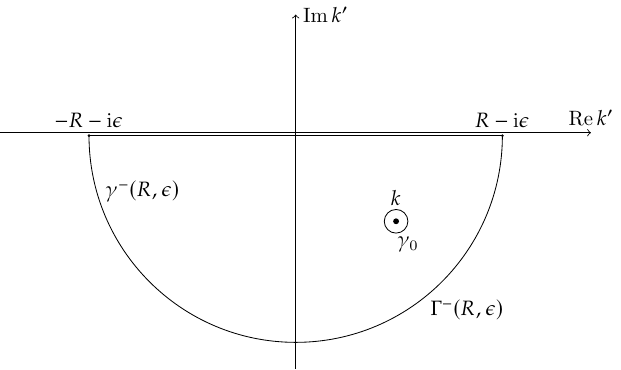}
		\caption{The closed contour $\gamma^-(R,\epsilon)$ is made by the segment from $-R-\ii \epsilon$ to $R-\ii \epsilon$ plus the arc between them in  anti-clockwise way. The arc $\Gamma^-(R,\epsilon)$  is an arc from $-R-\ii \epsilon$ to $R-\ii \epsilon$ in a clockwise way. The circle $\gamma_0$ is a contour around the pole $k'=k$.}
		\label{Contour}
	\end{figure}	
	Then we can write the integral over the interval $(-R+i\epsilon, R + i \epsilon)$ as an  integral over the contour $\gamma^+(R,\epsilon)$, which consists of the arc on the upper half plane subtended by the segment $(-R+i\epsilon, R + i \epsilon)$ plus the segment itself; an integral over the arc mentioned before with opposite verse $\Gamma^+(R,\epsilon)$ . We do something similar with the integral over the interval $(R-i\epsilon, -R - i \epsilon)$ that we write as integral over the contour $\gamma^-(R,\epsilon)$, which consists of the arc on the lower half plane subtended by the segment $(R-i\epsilon, -R - i \epsilon)$ plus the segment itself; an integral over the  arc mentioned above with opposite verse $\Gamma^-(R,\epsilon)$ and an integral over the positive oriented small circle $\gamma_0$ around the pole $k'=k$, that we consider lying in $\Im k <0$, see Figure \ref{Contour}. 
	
	Then
	\begin{align*}
	&- \frac{1}{2 \pi i} \int_{-R+i \epsilon}^{R + i\epsilon} \frac{ \psi_+(k') -1 }{k'-k} dk' - \frac{1}{2 \pi i} \int_{R - i\epsilon}^{-R - i \epsilon} \frac{ \psi_- (k') -1}{k'-k} dk' - \frac{1}{2 \pi i} \int_{\gamma^+(R,\epsilon)} \frac{ \psi_+(k') -1 }{k'-k} dk'  \nonumber \\
	& - \frac{1}{2 \pi i} \int_{\gamma^-(R,\epsilon)} \frac{ \psi_-(k') -1 }{k'-k} dk' + \frac{1}{2 \pi i} \int_{\Gamma^+(R,\epsilon)} \frac{ \psi_+(k') -1 }{k'-k} dk'  + \frac{1}{2 \pi i} \int_{\Gamma^-(R,\epsilon)} \frac{ \psi_-(k') -1 }{k'-k} dk' \nonumber \\
	&-\frac{1}{2 \pi i}  \int_{\gamma_0 } \frac{ \psi_- (k')-1 }{k'-k} dk'
	\end{align*}
	becomes
	\begin{align}\label{integral equation}
	& -\psi_-(x,k) + 1 - \sum_{j=1}^N \Res_{k'=k_j} \frac{ \psi_+(k')  }{k_j-k}   + \sum_{j=1}^N \Res_{k'=-k_j} \frac{ \psi_- (k') }{k_j+k} \nonumber \\
	&+ \frac{1}{2  \pi i} \int_{\Gamma^+(R,\epsilon)} \frac{ \psi_+(k') - 1 }{k'-k} dk' + \frac{1}{2 \pi i} \int_{\Gamma^-(R,\epsilon)} \frac{ \psi_-(k') - 1 }{k'-k} dk'.
	\end{align}
	We have
	\begin{equation*}
	\lim_{R \to \infty} \lim_{\epsilon \to 0} \left( \frac{1}{2 \pi i} \int_{\Gamma^+(R,\epsilon)} \frac{ \psi_+(k') - 1 }{k'-k} dk' + \frac{1}{2 \pi i} \int_{\Gamma^-(R,\epsilon)} \frac{ \psi_-(k') - 1 }{k'-k} dk'\right) = 0,
	\end{equation*}
by the Jordan Lemma, since $\psi_{\pm} - 1$ is of order $1/k'$.
	Thus \eqref{integral equation} becomes 
	\begin{align*}
	& - \frac{1}{2 \pi i} \int_{-\infty}^{\infty} \frac{ \psi_+(k')  }{k'-k} dk' +\frac{1}{2 \pi i} \int_{-\infty}^{\infty} \frac{ \psi_- (k') }{k'-k} dk'  = -\psi_-(x,k) + 1 \\
	& +\sum_{j=1}^N \Res_{k'=-k_j} \frac{ \psi_- (k') }{k_j+k}   - \sum_{j=1}^N \Res_{k'=k_j} \frac{ \psi_+(k')  }{k_j-k}
	\end{align*}
	so
	\begin{align}\label{integral equation 2}
	\psi(x,k)  - 1 &= - \frac{1}{2 \pi i} \int_{-\infty}^{\infty} \frac{ - \psi_+(k') + \psi_- (k') }{k'-k} dk' + \sum_{j=1}^N \Res_{k'=-k_j} \frac{ \psi_- (k') }{k_j+k}  - \sum_{j=1}^N \Res_{k'=k_j} \frac{ \psi_+(k')  }{k_j-k} \nonumber \\
	& = - \frac{1}{2 \pi i} \int_{-\infty}^{\infty} \frac{ - \psi_+(k') + \psi_- (k') }{k'-k} dk'  + \sum_{j=1}^N  \frac{ik_j}{k_j + k} e^{-ik_j x} \varphi(x,-k_j) \Res_{k'=-k_j} M^-(\lambda) \nonumber\\
	&\phantom{=\;}+  \sum_{j=1}^N \frac{ik_j}{k_j - k} e^{ik_j x} \varphi(x,k_j) \Res_{k'=k_j} M^+(\lambda).
	\end{align}
	Since $\varphi(x,k)$ is even in $k$ we have $\varphi(x,-k) = \varphi(x,k)$.
	On the left-hand side, we have a meromorphic function minus its singular terms. Recalling \eqref{Residue alpha j formula}, we have that
	\begin{equation*}
	\alpha_j = \Res_{\lambda'=\lambda_j}M(\lambda') = 2 k_j \Res_{k'=k_j} M^+(\lambda) = -2 k_j \Res_{k'=-k_j} M^-(\lambda).
	\end{equation*}
	Plugging \eqref{psi+ meno psi-} in \eqref{integral equation 2} we get
	\begin{align*}
	\psi(x,k) &= 1 - \frac{1}{2 \pi } \int_{-\infty}^{\infty} \frac{k' e^{ik'x} \varphi(x,k') \left(j(k') - j(-k' ) \right)}{k'-k} dk'  \\
	&\phantom{=\;} - \sum_{j=1}^N  \frac{i\alpha_j}{2(k_j + k)} e^{-ik_j x} \varphi(x,k_j)  -  \sum_{j=1}^N \frac{i\alpha_j}{2(k - k_j)} e^{ik_j x} \varphi(x,k_j).
	\end{align*}
	Formula \eqref{varphi from boundary values} was obtained before in \eqref{varphi formula}.
\end{proof}

In the next corollary (see \cite[Corollary, page 6701]{Bealsetal1995}) we show the connection between the potential $V$ and the eigenvalues $k_j$, the functions $j(k)$ and the normalizing constants $\alpha_j$.
\begin{corollary}
	The potential $V$ satisfies the identity
	\begin{align}\label{formula for the potential and h}
	&\int_{0}^{x} V(t) dt - 2h = -\frac{2i}{ \pi}  \int_{-\infty}^{\infty} k'  \varphi (x,k')  \cos (k'x)  \,j(k') dk' -2 \sum_{j=1}^N \alpha_j  \varphi(x,k_j) \cos (k_jx)
	\end{align}
	and the function $\varphi(x,k)$ satisfies 
	\begin{align}\label{representation of varphi function}
	 2\varphi(x,k) &= 2 \cos(kx) - \frac{i}{\pi} \int_{-\infty}^{+\infty} k' \varphi(x,k')  j(k') \left[ \frac{\sin(k'-k)x}{k'-k}  +  \frac{\sin(k'+k)x}{k'+k}\right] \nonumber\\
	&\phantom{=\;}-  \sum_{j=1}^N \alpha_j  \varphi(x,k_j) \left[ \frac{\sin(k_j-k)x}{k_j-k}  +  \frac{\sin(k_j+k)x}{k_j+k}\right].
	\end{align}
\end{corollary}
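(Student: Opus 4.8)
The plan is to derive both identities directly from the representation \eqref{representation of psi prop 3} of $\psi(x,k)$, together with the reconstruction formula \eqref{varphi from boundary values} and the asymptotic expansion \eqref{Asymptotics of psi-}. Throughout, write $G(k'):=k' e^{ik'x}\varphi(x,k')\big(j(k')-j(-k')\big)$ for the density appearing in the Cauchy integral in \eqref{representation of psi prop 3}, and let $P(k)$ denote the finite sum of rational (in $k$) terms there.

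First I would prove \eqref{representation of varphi function}. The idea is to substitute \eqref{representation of psi prop 3}, evaluated at the boundary values $\psi_+(x,-k)$ and $\psi_-(x,k)$ on the real axis, into $2\varphi(x,k)=e^{ikx}\psi_+(x,-k)+e^{-ikx}\psi_-(x,k)$. The constant terms contribute $e^{ikx}+e^{-ikx}=2\cos kx$. For the Cauchy integral I would take boundary values via the Sokhotski--Plemelj formula; the two resulting delta contributions are $\tfrac{i}{2}G(k)$ from $\psi_-(x,k)$ and $-\tfrac{i}{2}G(-k)$ from $\psi_+(x,-k)$, and they cancel after multiplication by $e^{-ikx}$ and $e^{ikx}$ respectively, since $G(-k)e^{ikx}=G(k)e^{-ikx}$ (here one uses that $\varphi(x,\cdot)$ is even and $j(k)-j(-k)$ is odd). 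What remains is a principal-value integral over $\mathbb R$ with kernel $\tfrac{e^{ikx}}{k'+k}+\tfrac{e^{-ikx}}{k'-k}$; splitting $j(k')-j(-k')$ into its two summands and performing the change of variable $k'\mapsto -k'$ in the $j(-k')$ summand (again using that $\varphi(x,\cdot)$ is even) merges the two into $-\tfrac{i}{\pi}\int_{-\infty}^{\infty}k'\varphi(x,k')\,j(k')\big[\tfrac{\sin(k'-k)x}{k'-k}+\tfrac{\sin(k'+k)x}{k'+k}\big]\,dk'$, whose kernel is now the entire sinc function, so the principal value is an ordinary integral. Applying the same symmetrization to $P(-k)$ and $P(k)$ turns the discrete part into $-\sum_{j=1}^{N}\alpha_j\varphi(x,k_j)\big[\tfrac{\sin(k_j-k)x}{k_j-k}+\tfrac{\sin(k_j+k)x}{k_j+k}\big]$; collecting the three contributions gives \eqref{representation of varphi function}.

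Next I would obtain \eqref{formula for the potential and h} by comparing two expansions of $\psi_-(x,k)$ as $|k|\to\infty$ with $\im k\le 0$. By \eqref{Asymptotics of psi-}, $\psi_-(x,k)=1+\tfrac{1}{ik}\big(h-\tfrac12\int_0^x V(t)\,dt\big)+o(k^{-1})$, hence $-2ik\big(\psi_-(x,k)-1\big)\to\int_0^x V(t)\,dt-2h$. On the other hand, since \eqref{representation of psi prop 3} represents exactly $\psi_-$ in the lower half-plane, $\psi_-(x,k)-1=-\tfrac{1}{2\pi}\int_{-\infty}^{\infty}\tfrac{G(k')}{k'-k}\,dk'+P(k)$; multiplying by $-2ik$ and letting $k\to\infty$, so that $\tfrac{k}{k'-k}\to-1$ and $\tfrac{k}{k\pm k_j}\to1$, gives $-\tfrac{i}{\pi}\int_{-\infty}^{\infty}G(k')\,dk'-2\sum_{j=1}^{N}\alpha_j\varphi(x,k_j)\cos(k_jx)$. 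Finally, the substitution $k'\mapsto -k'$ yields $\int_{-\infty}^{\infty}G(k')\,dk'=2\int_{-\infty}^{\infty}k'\varphi(x,k')\,j(k')\cos(k'x)\,dk'$, and equating the two limits gives \eqref{formula for the potential and h}.

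The main obstacle is analytic rather than algebraic: all the integrals here are only conditionally convergent, since on the real axis $k'\varphi(x,k')j(k')=O(k'^{-1})$ and convergence comes solely from the oscillatory factor $e^{ik'x}$ (equivalently $\sin k'x$). Thus passing to boundary values, interchanging the limit $k\to\infty$ with the integral, and performing the changes of variable all require justification --- for instance by excising a neighbourhood of infinity and integrating by parts once, using $V'\in L^1$, or by the Riemann--Lebesgue lemma together with the uniform boundedness of $\psi$ on $\mathbb C$. One must also keep $x>0$ fixed (the kernels degenerate at $x=0$) and make sure the limit is taken within the half-plane where \eqref{representation of psi prop 3} represents $\psi_-$.
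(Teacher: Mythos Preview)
Your argument is correct and follows essentially the same route as the paper: both identities are read off from the representation \eqref{representation of psi prop 3} by (i) multiplying by $ik$ and letting $|k|\to\infty$, and (ii) inserting \eqref{representation of psi prop 3} into the reconstruction formula \eqref{varphi from boundary values} and symmetrizing via $k'\mapsto -k'$ to turn the Cauchy kernels into the sinc kernels $\tfrac{\sin(k'\pm k)x}{k'\pm k}$. The only noteworthy difference is tactical: you evaluate $\psi_\pm$ on the real axis via Sokhotski--Plemelj and then check that the two $\delta$-contributions cancel, whereas the paper keeps $k$ slightly off the real line (in $\Im k<0$), so that \eqref{representation of psi prop 3} applies directly with an ordinary integral and no boundary-value correction is needed; since the resulting kernel is entire in $k$, the two computations agree. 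Your closing remarks on the conditional convergence and the need to justify the limit-integral interchange are well taken and go beyond what the paper makes explicit.
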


\begin{proof}
	We start first with the proof of equation \eqref{formula for the potential and h} (\textit{Step 1}) and then we recover \eqref{representation of varphi function} (\textit{Step 2}). 
	\begin{itemize}
		\item 		\textit{Step 1}.
		We already know the asymptotic expansion of $\psi$ from \eqref{Asymptotics of psi-}:
		\begin{equation*}
		\psi (x,k) - 1=  -\frac{1}{2ik} \int_{0}^{x} V(t) dt + \frac{h}{ik} + o(k^{-1}).
		\end{equation*}	
		Multiplying $\psi -1$ by $ik$ and taking the limit as $k \to \infty$, we get
		\begin{equation}\label{corollary one first eq}
		\lim_{k \to \infty} ik \left( \psi(x,k) -1 \right)= - \frac{1}{2}\int_{0}^{x} V(t) dt + h.
		\end{equation}
		In \eqref{representation of psi prop 3}, we multiply by $ik$ and take the limit as $k \to \infty$ 
		\begin{align}\label{psi - 1 representation}
		\lim_{k \to \infty} ik \left( \psi(x,k) -1 \right) &= \frac{i}{2 \pi} \int_{-\infty}^{\infty} k' e^{i k' x} \varphi (x,k') (j(k') - j(-k')) dk' \nonumber \\
		&\phantom{=\;}+ \frac{1}{2} \sum_{j=1}^N \alpha_j e^{-ik_j x} \varphi(x,k_j) + \frac{1}{2} \sum_{j=1}^N \alpha_j e^{ik_j x} \varphi(x,k_j)  \nonumber \\
		&= \frac{i}{2 \pi} \int_{-\infty}^{\infty} k' e^{i k' x} \varphi (x,k')  (j(k') - j(-k')) dk' + \sum_{j=1}^N \alpha_j  \varphi(x,k_j) \cos (k_jx).
		\end{align}
		The first term can be written as
		\begin{align*}
		&\int_{-\infty}^{\infty} k' e^{i k' x} \varphi (x,k') (j(k') - j(-k')) dk'  = \int_{-\infty}^{\infty} 2k'  \varphi (x,k') \cos (k_jx)  \,j(k') dk'.
		\end{align*}
		Plugging this result in \eqref{psi - 1 representation} and comparing with \eqref{corollary one first eq} we get
		\begin{align*}
		\int_{0}^{x} V(t) dt - 2h &= -\frac{2i}{ \pi}  \int_{-\infty}^{\infty} k'  \varphi (x,k') \cos (k'x)  \,j(k') dk' -2 \sum_{j=1}^N \alpha_j  \varphi(x,k_j) \cos (k_jx).
		\end{align*}
		\item \textit{Step 2}. We know that
		\begin{equation*}
		2 \varphi(x,k) = e^{ikx} \psi_+(x,-k) + e^{-ikx} \psi_-(x,k).
		\end{equation*}
		Both the function $\psi_+(x,-k)$ and $\psi_-(x,k)$ have poles in the lower half plane, so we can consider $k$ in the lower half plane and use the formula \eqref{representation of psi prop 3} for $e^{ikx} \psi_+(x,-k)$:
		\begin{align*}
		 e^{ikx} \psi_+(x,-k) &= e^{ikx} - \frac{1}{2 \pi } \int_{-\infty}^{\infty} \frac{k' e^{i(k + k')x} \varphi(x,k') \left(j(k') - j(-k' ) \right)}{k'+k} dk'  \\
		&\phantom{=\;}+ \sum_{j=1}^N  \frac{\alpha_je^{-i(k_j-k) x}}{2i(k_j - k)}  \varphi(x,k_j)  -  \sum_{j=1}^N \frac{\alpha_je^{i(k_j +k)x}}{2i(k + k_j)}  \varphi(x,k_j).
		\end{align*}
		The first integral can be rewritten as
		\begin{align*}
		& - \frac{1}{2 \pi } \int_{-\infty}^{+\infty} \frac{k' e^{i(k + k')x} \varphi(x,k') \left(j(k') - j(-k' ) \right)}{k'+k} dk' = \\
		&- \frac{1}{2 \pi } \int_{-\infty}^{+\infty} \frac{k' e^{i(k + k')x} \varphi(x,k') j(k')  }{k'+k} dk' -  \frac{1}{2 \pi } \int_{+\infty}^{-\infty} \frac{(-k') e^{-i(k'-k)x} \varphi(x,k') (- j(k' )) }{k-k'} (-dk') 
		\end{align*}
		where the second integral after the change of variable from $k'$ to $-k'$ becomes
		\begin{align*}
		\frac{1}{2 \pi } \int_{-\infty}^{+\infty} \frac{k' e^{-i(k'-k)x} \varphi(x,k')  j(k') }{k'-k} dk'.
		\end{align*}
		So, in the end we have
		\begin{align}\label{exp ikx times psi+}
		e^{ikx} \psi_+(x,-k) &= e^{ikx} - \frac{1}{2 \pi } \int_{-\infty}^{+\infty} \frac{k' e^{i(k + k')x} \varphi(x,k') j(k')  }{k'+k} dk' + \frac{1}{2 \pi } \int_{-\infty}^{+\infty} \frac{k' e^{-i(k'-k)x} \varphi(x,k')  j(k') }{k'-k} dk' \nonumber \\
		&\phantom{=\;}+ \sum_{j=1}^N  \frac{\alpha_je^{-i(k_j-k) x}}{2i(k_j - k)}  \varphi(x,k_j)  -  \sum_{j=1}^N \frac{\alpha_je^{i(k_j +k)x}}{2i(k + k_j)}  \varphi(x,k_j).
		\end{align}
		Similarly, for $e^{-ikx} \psi_-(x,k)$ we get
		\begin{align}\label{exp -ikx times psi-}
		e^{-ikx} \psi_-(x,k) &= e^{-ikx} - \frac{1}{2 \pi } \int_{-\infty}^{+\infty} \frac{k' e^{i(k' - k)x} \varphi(x,k') j(k')  }{k' - k} dk'  +  \frac{1}{2 \pi } \int_{-\infty}^{+\infty} \frac{k' e^{-i(k'+k)x} \varphi(x,k')  j(k') }{k'+k} dk' \nonumber \\
		&\phantom{=\;}- \sum_{j=1}^N  \frac{\alpha_j e^{i(k_j - k)x}}{2i( k_j - k)}  \varphi(x,k_j)  +  \sum_{j=1}^N \frac{\alpha_je^{-i(k_j + k)x}}{2i(k + k_j)}  \varphi(x,k_j).
		\end{align}
		Summing \eqref{exp -ikx times psi-} and \eqref{exp ikx times psi+} we get
		\begin{align*}
		 e^{ikx} \psi_+(x,-k) + e^{-ikx} \psi_-(x,k) &= 2 \cos(kx) \\
		&\phantom{=\;}- \frac{i}{\pi} \int_{-\infty}^{+\infty} k' \varphi(x,k')  j(k') \left[ \frac{\sin(k'-k)x}{k'-k}  +  \frac{\sin(k'+k)x}{k'+k}\right] \\
		&\phantom{=\;}-  \sum_{j=1}^N \alpha_j  \varphi(x,k_j) \left[ \frac{\sin(k_j-k)x}{k_j-k}  +  \frac{\sin(k_j+k)x}{k_j+k}\right].
		\qedhere
		\end{align*}
	\end{itemize}
\end{proof}
Equation \eqref{formula for the potential and h} motivates introducing
\begin{equation}\label{formula for the transformation kernel}
K(x,y) = \frac{i}{\pi} \int_{-\infty}^{+\infty} k' \varphi(x,k') j(k') \cos (k'y) dk' + \sum_{j=1}^N \varphi(x,k_j) \alpha_j \cos(k_jy).
\end{equation} 
Then equation \eqref{representation of varphi function} can be written as
\begin{equation*}
\varphi (x,k) = \cos (kx) - \frac{1}{2} \int_{-x}^{x} K(x,t) \cos (kt) dt =  \cos (kx) - \int_{0}^{x} K(x,t) \cos (kt) dt.
\end{equation*}

The next proposition shows the Gelfand--Levitan equation and the algorithm one can use to recover the potential. One can compare the following proposition with \cite[Proposition 5]{Bealsetal1995}.

\begin{proposition}\label{Proposition Gelfand Levitan eq}
	The potential can be reconstructed from the Weyl function through the formula
	\begin{equation}\label{formula potential V and K(x,x)}
	V(x) = -2 \frac{d}{dx} K(x,x),
	\end{equation}
	where $K(x,y)$ satisfies the Gelfand--Levitan equation
	\begin{equation}\label{Gelfand Levitan equation}
	K(x,y) - g(x,y)+ \frac{1}{2} \int_{-x}^{x} K(x,s) g(s,y) ds=0,
	\end{equation}
	with
	\begin{align}\label{formula for the gelfand kernel}
	g(x,y) =
	\begin{cases}
	\frac{i}{\pi} \int_{-\infty}^{+\infty} k' \cos (k'x) j(k') \cos (k'y) dk' + \sum_{j=1}^N \cos(k_jx) \alpha_j \cos(k_jy) \qquad &x\geq y \\
	0 \qqq \qqq \qqq \qqq \qqq   &x<y
	\end{cases}.
	\end{align}
\end{proposition}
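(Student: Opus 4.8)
The plan is to treat the two assertions of the proposition in turn. The formula $V(x)=-2\frac{d}{dx}K(x,x)$ is essentially a rewriting of \eqref{formula for the potential and h}: evaluating the kernel \eqref{formula for the transformation kernel} on the diagonal $y=x$ gives $K(x,x)=\frac{i}{\pi}\int_{-\infty}^{\infty}k'\varphi(x,k')j(k')\cos(k'x)\,dk'+\sum_{j=1}^{N}\varphi(x,k_j)\alpha_j\cos(k_jx)$, so \eqref{formula for the potential and h} says precisely $\int_{0}^{x}V(t)\,dt-2h=-2K(x,x)$. Differentiating in $x$ (which is legitimate once one knows that $K$ is $C^1$ up to the diagonal, a regularity that follows from $V,V'\in L^1$ together with the bounds of Lemma \ref{uniform bounds on Jost function}) yields $V(x)=-2\frac{d}{dx}K(x,x)$; setting $x=0$ incidentally gives $K(0,0)=h$.

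For the Gelfand--Levitan equation I would substitute the transformation-operator representation of the regular solution, $\varphi(x,k)=\cos(kx)-\int_{0}^{x}K(x,t)\cos(kt)\,dt=\cos(kx)-\tfrac12\int_{-x}^{x}K(x,t)\cos(kt)\,dt$ (the identity displayed just after \eqref{formula for the transformation kernel}), back into the defining formula \eqref{formula for the transformation kernel} of $K$, applying it both to $\varphi(x,k')$ under the integral and to $\varphi(x,k_j)$ in the finite sum. The integrand then splits into a ``free'' contribution carrying $\cos(k'x)$ and $\cos(k_jx)$ and a contribution carrying the inner integrals $\int_0^x K(x,t)\cos(k't)\,dt$ and $\int_0^x K(x,t)\cos(k_jt)\,dt$. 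Interchanging the $k'$-integration and the finite sum with the inner $t$-integration, and using $\cos(k'x)\cos(k'y)=\tfrac12[\cos k'(x-y)+\cos k'(x+y)]$ (which is why the kernel $g$ in \eqref{formula for the gelfand kernel} depends only on $x\pm y$), the free contribution reassembles, in the regime $x\ge y$, into $g(x,y)$, while the remaining contribution becomes $-\tfrac12\int_{-x}^{x}K(x,s)\,g(s,y)\,ds$ once one identifies the reassembled inner kernel with $g(s,y)$ via its piecewise definition. This gives \eqref{Gelfand Levitan equation}, and then \eqref{formula potential V and K(x,x)} is just the first assertion again, so nothing further is needed.

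The trigonometric reassembling is routine and I would not write it out. The two places where I expect the real work to be are: (i) justifying the interchange of the $k'$-integral with the $t$-integral — on the real axis $\varphi(x,k')=O(1)$ by \eqref{asymptotic estimates on general solution} and $j(k')=O(k'^{-2})$ by Lemma \ref{Lemma asympt exp of Weyl function} (using the integration by parts in $\hat V$ allowed by $V'\in L^1$), so $k'\varphi(x,k')j(k')\cos(k'y)$ is only $O(1/k')$ and the $k'$-integrals converge merely conditionally, which forces one to apply Fubini after a regularisation or after one more integration by parts in $k'$; and (ii) the bookkeeping that replaces the symmetric reassembled kernel by the one-sided $g$, i.e. correctly tracking the support of $K(x,\cdot)$ (which should be contained in $[-x,x]$, a fact itself requiring the exponential-type bounds on $\varphi$, $f$ and $f_h$) together with the jump of $g$ across its cut, so that the interval $[-x,x]$ in \eqref{Gelfand Levitan equation} — with $g(s,y)$ vanishing for $s<y$ — is the one that comes out. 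I regard (ii) as the main obstacle.
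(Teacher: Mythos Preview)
Your approach coincides with the paper's: it too substitutes the representation $\varphi(x,k)=\cos(kx)-\tfrac12\int_{-x}^{x}K(x,t)\cos(kt)\,dt$ (which the paper derives as the first step of the proof, by computing $\int_{-x}^{x}K(x,y)\cos(ky)\,dy$ directly and comparing with \eqref{representation of varphi function}) into the difference $g(x,y)-K(x,y)$ and recognises the outcome as $\tfrac12\int_{-x}^{x}K(x,s)g(s,y)\,ds$. The two concerns you flag are not handled more carefully in the paper either --- the interchange of the $k'$- and $s$-integrals is performed without comment, and the passage from the symmetric inner kernel to the piecewise $g$ is dispatched with nothing more than the qualifier ``for $y\le s\le x$'' --- so neither is the obstacle you fear; in particular the support of $K(x,\cdot)$ never enters, since the paper simply integrates the explicitly defined $K(x,s)$ over $[-x,x]$ against the full symmetric expression and then names that expression $g(s,y)$.
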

\begin{proof}
	We consider 
	\begin{align}\label{kernel k times cos}
	&\int_{-x}^{x} K(x,y) \cos (ky) dy = \int_{-x}^{x} \left[\frac{i}{\pi} \int_{-\infty}^{+\infty} k' \varphi(x,k') j(k') \cos (k'y) dk' + \sum_{j=1}^N \varphi(x,k_j) \alpha_j \cos(k_jy)\right] \cos (ky) dy \nonumber \\
	&= \frac{i}{\pi} \int_{-\infty}^{+\infty} k' \varphi(x,k') j(k') + \int_{-x}^{x}\cos (k'y)\cos (ky) dydk' + \sum_{j=1}^N \varphi(x,k_j) \alpha_j  \int_{-x}^{x}\cos (k_j y)\cos (ky) dy 
	\end{align}
	and we can calculate
	\begin{align*}
	\int_{-x}^{x}\cos (\alpha y)\cos (\beta y) dy &=  \frac{\sin\left((\alpha+\beta)x\right) }{\alpha+\beta } + \frac{\sin\left((\alpha-\beta)x\right) }{\alpha-\beta}.
	\end{align*}
	Plugging this in \eqref{kernel k times cos} we get
	\begin{align*}
	\int_{-x}^{x} K(x,y) \cos (ky) dy &= \frac{i}{\pi} \int_{-\infty}^{+\infty} k' \varphi(x,k') j(k') \left[\frac{\sin\left((k'+k)x\right) }{k'+k }  + \frac{\sin\left((k'-k)x\right) }{k'-k}\right] dk'  \\
	&\phantom{=\;}+\sum_{j=1}^N \varphi(x,k_j) \alpha_j \left[\frac{\sin\left((k_j+k)x\right) }{k_j+k } + \frac{\sin\left((k_j-k)x\right) }{k_j-k}\right].
	\end{align*}
	Then, comparing with \eqref{representation of varphi function}, it follows that
	\begin{equation}\label{Remark 2 important formula}
	2\varphi(x,k) - 2 \cos(kx) = - \int_{-x}^{x} K(x,y) \cos (ky) dy.
	\end{equation}
	Taking into account \eqref{formula for the gelfand kernel} and \eqref{formula for the transformation kernel} we can calculate the difference
	\begin{align*}
	&2g(x,y) - 2K(x,y) = \frac{2i}{\pi} \int_{-\infty}^{+\infty} k' \cos (k'x) j(k') \cos (k'y) dk' + 2\sum_{j=1}^N \cos(k_jx) \alpha_j \cos(k_jy) \\
	&- \frac{2i}{\pi} \int_{-\infty}^{+\infty} k' \varphi(x,k') j(k') \cos (k'y) dk' -2 \sum_{j=1}^N \varphi(x,k_j) \alpha_j \cos(k_jy) \\
	&=  2\sum_{j=1}^N \cos(k_jy) \alpha_j \left[\cos (k_jx) - \varphi(x,k_j)\right] + \frac{2i}{\pi} \int_{-\infty}^{+\infty} k' j(k') \cos (k'y) \left[\cos (k'x) - \varphi(x,k')\right] dk' 
	\end{align*}
	and, using formula \eqref{Remark 2 important formula}, we get that
	\begin{align*}
	\cos (k'x) - \varphi(x,k') = \frac{1}{2} \int_{-x}^{x} K(x,s) \cos (ks) ds.
	\end{align*}
	Plugging this result into the previous calculations, we get
	\begin{align*}
	2g(x,y) - 2K(x,y) &= \frac{i}{\pi} \int_{-\infty}^{+\infty} \int_{-x}^{x} k' j(k') \cos (k'y) K(x,s) \cos (ks) ds \\
	&\phantom{=\;} +\sum_{j=1}^N \cos(k_jy) \alpha_j   \int_{-x}^{x} K(x,s) \cos (ks) ds \\
	&=  \int_{-x}^{x} K(x,s) \left[  \frac{i}{\pi} \int_{-\infty}^{+\infty} k' \cos (k'y) j(k') \cos (k's) dk' + \sum_{j=1}^N \alpha_j  \cos(k_jy) \cos(k_js) \right]ds \\
	&= \int_{-x}^{x} K(x,s) g(s,y) ds \qqq \text{for } \; y \leq s \leq x.
	\end{align*}
	Hence, the kernel $K(x,y)$ satisfies 
	\begin{equation*}
	K(x,y) - g(x,y)+ \frac{1}{2} \int_{-x}^{x} K(x,s) g(s,y) ds = 0.
	\qedhere
	\end{equation*}
\end{proof}
\begin{remark}[Uniqueness]
	Let $V$ and $\tilde{V}$ be in $\mathbb{V}^1_{x_I}$ with Weyl functions $M$ and  $\tilde{M}$ respectively. If $M=\tilde{M}$, then $V= \tilde{V}$. Indeed, from equation \eqref{formula for the transformation kernel} we see that
	\begin{align*}
	&K(x,x) = \frac{i}{\pi} \int_{-\infty}^{+\infty} k' \varphi(x,k') \left(M(\lambda)\ - \frac{1}{ik'} \right) \cos (k'x) dk'  + \sum_{j=1}^N \varphi(x,k_j) \cos(k_jx) 2 k_j \operatorname{Res}_{k'=k_j} M(\lambda)
	\end{align*}
	and
	\begin{align*}
	& \tilde{K}(x,x) = \frac{i}{\pi} \int_{-\infty}^{+\infty} k' \tilde{\varphi}(x,k') \left(\tilde{M}(\lambda)\ - \frac{1}{ik'} \right) \cos (k'x) dk'  + \sum_{j=1}^N \tilde{\varphi}(x,k_j) \cos(k_jx) 2 k_j \operatorname{Res}_{k'=k_j} \tilde{M}(\lambda).
	\end{align*}
	If $M (\lambda)= \tilde{M}(\lambda)$ then $ \varphi(x,k') = \tilde{\varphi}(x,k')$, which also implies $K(x,x) = \tilde{K}(x,x)$, which leads to $V(x) = \tilde{V}(x)$.
\end{remark}
\begin{remark}
	Since $K(x,s)$ and $g(s,y)$ are even, namely $K(x,-s)= K(x,s)$ and $g(-s,y) = g(s,y)$, we can write \eqref{Gelfand Levitan equation} also as 
	\begin{equation*}
	K(x,y) - g(x,y)+  \int_{0}^{x} K(x,s) g(s,y) ds=0.
	\end{equation*}
\end{remark}

The next theorem shows for which condition the Gelfand--Levitan equation \eqref{Gelfand Levitan equation} has a unique solution (see \cite[Remark (ii), page 6708]{Bealsetal1995}).
\begin{theorem}
	The Gelfand--Levitan equation \eqref{Gelfand Levitan equation} has a unique solution, for fixed $x>0$, if
	\begin{equation}\label{condition for the unique solvability of the GL equation}
	\int_{0}^{x}  \sup_{0\leq s \leq t} \left| g(t,s)\right| dt < \infty
	\end{equation}
	holds.
\end{theorem}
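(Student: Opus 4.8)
The plan is to exploit the Volterra (triangular) structure of the Gelfand--Levitan equation. First I would record that, by the very definition \eqref{formula for the gelfand kernel}, the kernel $g(s,y)$ vanishes whenever $s<y$; hence for fixed $x>0$ the integral in \eqref{Gelfand Levitan equation} reduces to $\int_{y}^{x}K(x,s)g(s,y)\,ds$, so the equation reads
\begin{equation*}
K(x,y)+\int_{y}^{x}K(x,s)g(s,y)\,ds=g(x,y),\qquad 0\le y\le x,
\end{equation*}
which for fixed $x$ is a linear Volterra integral equation of the second kind in the variable $y$, of the form $(I+\mathcal G_x)K(x,\cdot)=g(x,\cdot)$ with $(\mathcal G_x\eta)(y)=\int_{y}^{x}\eta(s)g(s,y)\,ds$ acting, say, on $C[0,x]$.

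Next I would set $G(t):=\sup_{0\le s\le t}|g(t,s)|$, so that hypothesis \eqref{condition for the unique solvability of the GL equation} is exactly $\Phi(0)<\infty$, where $\Phi(s):=\int_{s}^{x}G(u)\,du$ (note $\Phi'=-G$ and $\Phi(x)=0$). The key estimate is that the iterated operators satisfy, for any $\eta\in C[0,x]$,
\begin{equation*}
\bigl|(\mathcal G_x^{\,n}\eta)(y)\bigr|\le \frac{\|\eta\|_{\infty}}{n!}\,\Phi(y)^{n},\qquad n\ge 0,\ 0\le y\le x,
\end{equation*}
which is proved by induction: the inductive step uses $|g(s,y)|\le G(s)$ for $s\ge y$ together with $\int_{y}^{x}\Phi(s)^{n}G(s)\,ds=-\int_{y}^{x}\Phi(s)^{n}\Phi'(s)\,ds=\Phi(y)^{n+1}/(n+1)$. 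Summing over $n$ gives $\|\mathcal G_x^{\,n}\|\le \Phi(0)^{n}/n!$, so $\sum_{n\ge 0}(-1)^{n}\mathcal G_x^{\,n}$ converges in operator norm on $C[0,x]$; its sum is the resolvent $(I+\mathcal G_x)^{-1}$, and $K(x,\cdot):=\sum_{n\ge 0}(-1)^{n}\mathcal G_x^{\,n}g(x,\cdot)$ is then the solution. For uniqueness I would argue directly: if $D:=K_1(x,\cdot)-K_2(x,\cdot)$ solves $D=-\mathcal G_xD$, then $D=(-\mathcal G_x)^{n}D$ for every $n$, so $\|D\|_{\infty}\le \Phi(0)^{n}\|D\|_{\infty}/n!\to 0$, whence $D\equiv 0$.

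Finally I would note that existence requires the free term $g(x,\cdot)$ to lie in the space one works in; since $\sup_{0\le s\le x}|g(x,s)|=G(x)$ is finite under the standing regularity of $g$ (it is built from the continuous function $\varphi$, a convergent oscillatory integral, and a finite sum), one has $g(x,\cdot)\in C[0,x]$ and the Neumann series above indeed produces a continuous solution. I expect the only real subtlety to be bookkeeping: being careful that the triangular structure makes $\mathcal G_x$ genuinely Volterra (so that no Fredholm-alternative / injectivity hypothesis is needed), and that the single constant $\Phi(0)=\int_{0}^{x}G(t)\,dt$ controls \emph{all} iterated kernels; once the estimate on $\mathcal G_x^{\,n}$ is in place the conclusion is immediate. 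A cosmetically cleaner alternative, if preferred, is to substitute $y\mapsto x-y$ and $s\mapsto x-s$ so as to turn $\mathcal G_x$ into a standard lower-triangular Volterra operator and then invoke the classical unique-solvability theorem for such equations verbatim.
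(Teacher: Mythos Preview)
Your proof is correct and follows essentially the same route as the paper: both recognize the triangular (Volterra) structure coming from $g(s,y)=0$ for $s<y$, introduce the same quantity $G(t)=d(t)=\sup_{0\le s\le t}|g(t,s)|$, and obtain the factorial bound $\|\mathcal G_x^{\,n}\|\le \bigl(\int_0^x G\bigr)^n/n!$ on the iterated operators/kernels, from which convergence of the Neumann series and triviality of the homogeneous solution follow. Your write-up is in fact a bit more careful than the paper's (you spell out the induction via the antiderivative $\Phi$ and treat existence and uniqueness separately), but the argument is the same.
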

\begin{proof}
	The equation \eqref{Gelfand Levitan equation} is an inhomogeneous Volterra equation, where the inhomogeneous term is $-g(x,y)$. In order to have unique solvability of \eqref{Gelfand Levitan equation}, we require that the homogeneous equation
	\begin{equation*}
	K(x,y) +  \int_{0}^{x} K(x,s) g(s,y) ds=0
	\end{equation*}
	only admits the trivial solution $K(x,s)=0$. 
	One can find the solution of \eqref{Gelfand Levitan equation} from the resolvent $R(s,t)$, which is obtained by iterating the kernel $g(x,y)$
	\begin{equation*}
	R(s,t) = \sum_{k=0}^{\infty} (-1)^k g_{k+1} (s,t)
	\end{equation*}
	where $g_{k+1} (s,t)$ represents the $k+1$ iterate of the Volterra kernel. The solution is then
	\begin{equation*}
	K(x,y) = g(x,y) - \int_{0}^{x} R(x,t) g(t,y) dt.
	\end{equation*}
	We consider the second iterate of the kernel $g(x,y)$
	\begin{equation*}
	g_2(x,y) = \int_{0}^{x} \int_{0}^{t} g(t,s) g(s,y) dt ds, \qq 0\leq y \leq s \leq t \leq x.
	\end{equation*}
	Since $g(t,s)=0$ for $s>t$, we have
	\begin{align*}
	|g_2(x,y) |&=\left|  \int_{0}^{x} \int_{0}^{t} g(t,s) g(s,y) dt ds \right| \leq \int_{0}^{x} \int_{0}^{t} \sup_{0\leq s  \leq t} \left| g(t,s)\right| \sup_{0\leq y \leq s} \left| g(s,y)\right|dt ds.
	\end{align*}
	We define $d(t) := \sup_{0\leq s \leq t} \left| g(t,s)\right|$, so we get
	\begin{equation*}
	|g_2(x,y)| \leq \int_{0}^{x} \int_{0}^{t} d(s) d(t) dt ds= \frac{1}{2} \left(  \int_{0}^{x} d(s) ds \right)^2.
	\end{equation*}
	Similarly,
	\begin{equation*}
	|g_k(x,y) |\leq \frac{1}{k!} \left(  \int_{0}^{x} d(s) ds \right)^k
	\end{equation*}
	which implies that the homogeneous equation
	\begin{equation*}
	K(x,y) = - \int_{0}^{x} K(x,s) g(s,y) ds
	\end{equation*}
	admits only the trivial solution $K(x,y)=0$ as long as 
	\begin{equation*}
	\int_{0}^{x}  \sup_{0\leq s \leq t} \left| g(t,s)\right| dt < \infty.
	\qedhere
	\end{equation*}
\end{proof}
Condition \eqref{condition for the unique solvability of the GL equation} is required to have unique solvability of the Gelfand--Levitan equation. We have
\begin{align}\label{proof about the sup |g(ts)|}
&\int_{0}^{x} \left| \int_{-\infty}^{\infty}   k \cos (kt) \cos (ks) j(k) dk + \sum_{j=1}^{N}  \cos (k_jt) \cos (k_js) \; \alpha_j \right| dt \nonumber \\
&\leq  \int_{0}^{x} \int_{0}^{\infty} \left|k\right| \left| \cos kt\right|\ \left| \cos ks\right|\ \left|j(k) - j(-k) \right| dk dt+ c_1 x \nonumber \\
& \leq \int_{0}^{x} \int_{0}^{\infty}  \frac{dkdt}{\left|k\right|^2 } + c_1 x \leq c_2 x,
\end{align}
and we see that the condition is satisfied for the Weyl function of our problem, since $x>0$ is fixed in the Gelfand--Levitan equation.

\section{The inverse problem}\label{Inverse Love problem}


In this section we present an inverse result starting from a class $\mathbb{M}_{x_I}$ of Weyl functions  to the class $\mathbb{V}^1_{x_I}$. This motivates the following definition.
\begin{definition}[Class of Weyl function]\label{Class of Weyl function}
	For fixed $h\in \mathbb{R}$, we denote by $\mathbb{M}_{x_I}$ the class of functions $M(\lambda)$ satisfying the following properties:
	\begin{enumerate}[I)]
		\item \label{Condition I}
		$M(\lambda)$ is analytic in $\Pi$ with finite number $N$ of simple poles $\lambda_j<0$ and residues $\alpha_j= \Res_{\lambda= \lambda_j} M(\lambda) >0$.
		\item \label{Condition II}
		$M(\lambda)$ is continuous in $\Pi_1 \backslash \left\lbrace \lambda_1, ... \,, \lambda_N,0 \right\rbrace$ satisfying $k M(\lambda) = O(1)$ as $k\to 0$, $\Im k>0$.
		\item  \label{Condition III}
		Let $M^{\pm} (\lambda) = \lim_{\epsilon \to 0, \re \epsilon>0} M(\lambda \pm i \epsilon)$. Then
		\begin{equation*}
		T(\lambda) := \frac{1}{2 \pi i} (M^-(\lambda) - M^+(\lambda) )>0, \qqq \lambda>0.
		\end{equation*}
		\item \label{Condition IV}
		$M(\lambda) = \frac{1}{\ii k} + \frac{h}{k^2} + \frac{V(0)}{k^2}  + o(k^{-2})$, as $|k| \to +\infty$.
		\item \label{Condition VI}
		The Gelfand--Levitan equation
		\begin{equation*}
		g(x,y) + K(x,y) + \int_{0}^{x} K(x,s) g(s,y) ds = 0
		\end{equation*}
		with 
		\begin{align*}
		&g(x,y) =
		\begin{cases}
		\frac{i}{\pi} \int_{-\infty}^{+\infty} k' \cos (k'x) j(k') \cos (k'y) dk' + \sum_{j=1}^N \cos(k_jx) \alpha_j \cos(k_jy), \qquad &x\geq y, \\
		0, \qqq \qqq \qqq \qqq \qqq   &x<y,
		\end{cases}
		\end{align*}
		and $j(k) = M(\lambda) - \frac{1}{ik}$, for any fixed $x>0$, has a unique solution $K(x,y)$ with $K(x,x)$ real, absolutely continuous and $\frac{d}{dx} K(x,x) = 0$ for $x>x_I$ and non-zero in a set of non-zero Lebesgue measure $\left(x_I-\epsilon,x_I\right)$.
	\end{enumerate}
\end{definition}

In the following theorem we characterize the class $\mathbb{V}^1_{x_I} $ by the just defined class of Weyl functions $\mathbb{M}_{x_I}$.

\begin{theorem}\label{Theorem Inverse spectral Love problem}
	The map $\mathcal{J}_h: \mathbb{V}^1_{x_I} \to   \mathbb{M}_{x_I}$ defined by $\mathcal{J}_h \left(V\right):=M$ is well-defined and bijective.
\end{theorem}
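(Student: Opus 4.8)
The plan is to establish the three ingredients of the assertion separately: that $\mathcal J_h$ lands in $\mathbb M_{x_I}$ (well-definedness), that it is injective, and that it is surjective.

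\emph{Well-definedness.} For $V\in\mathbb V^1_{x_I}$ with associated Weyl function $M=\mathcal J_h(V)$ I would verify conditions \ref{Condition I}--\ref{Condition VI} one at a time, all of which are essentially already in hand. Analyticity of $M$ in $\Pi$ with finitely many simple poles at $\lambda_j=k_j^2<0$, together with the positivity $\alpha_j>0$ of the residues, is Theorem~\ref{Theorem poles of Weyl function} combined with the known spectral structure (finitely many simple purely imaginary $k_j$, no real eigenvalues) and Proposition~\ref{Connection between norming constants}, which gives \ref{Condition I}; continuity on $\Pi_1\setminus\{\lambda_1,\dots,\lambda_N,0\}$ and $kM(\lambda)=O(1)$ as $k\to0$ follow from Theorem~\ref{Theorem poles of Weyl function} and Lemma~\ref{proof 4} (using that $f(0,k)$ stays bounded near $k=0$), giving \ref{Condition II}; the sign $T(\lambda)>0$ for $\lambda>0$ is Proposition~\ref{Relation between jump of the Weyl function with Jost function}, giving \ref{Condition III}; the refined asymptotics $M(\lambda)=\tfrac{1}{ik}+\tfrac{h+V(0)}{k^2}+o(k^{-2})$ is Lemma~\ref{Lemma asympt exp of Weyl function} after the integration by parts in $\hat V$ performed just before Definition~\ref{Function psi} (where $V'\in L^1$ is used), giving \ref{Condition IV}; and \ref{Condition VI} is Proposition~\ref{Proposition Gelfand Levitan eq} together with the solvability bound \eqref{condition for the unique solvability of the GL equation} (verified for the $g$ coming from our $M$), the reconstruction identity $V(x)=-2\tfrac{d}{dx}K(x,x)$, and the observation that $\supp V\subset[0,x_I]$ with positive measure in every left neighbourhood of $x_I$ translates into $\tfrac{d}{dx}K(x,x)=0$ for $x>x_I$ and nonzero on a set of positive measure in $(x_I-\epsilon,x_I)$.

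\emph{Injectivity.} If $\mathcal J_h(V)=\mathcal J_h(\tilde V)$, i.e. $M\equiv\tilde M$, then $V=\tilde V$ by the Borg--Marchenko-type Theorem~\ref{Uniqueness}; alternatively, $M=\tilde M$ forces $\varphi=\tilde\varphi$ through the Gelfand--Levitan kernel, hence $K(x,x)=\tilde K(x,x)$ and $V=\tilde V$, as in the Remark after Proposition~\ref{Proposition Gelfand Levitan eq}.

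\emph{Surjectivity.} Fix $M\in\mathbb M_{x_I}$. Set $j(k)=M(\lambda)-\tfrac{1}{ik}$, extract the poles $\lambda_j$, residues $\alpha_j$ and jump $T(\lambda)$, form the kernel $g(x,y)$ of \ref{Condition VI}, solve the Gelfand--Levitan equation (uniquely, by \ref{Condition VI}) for $K(x,y)$, and put
\[
V(x):=-2\frac{d}{dx}K(x,x),\qquad
\varphi(x,k):=\cos kx-\int_0^x K(x,t)\cos kt\,dt.
\]
One then checks in turn that $\varphi$ solves $-\varphi''+V\varphi=k^2\varphi$ with $\varphi(0,k)=1$ and $\varphi'(0,k)=-h$, the constant $h$ and the value $V(0)$ being precisely those read off from \ref{Condition IV}; that $V$ is real because $K(x,x)$ is; that $\supp V\subset[0,x_I]$ with the prescribed non-degeneracy near $x_I$ because of the last clause of \ref{Condition VI}; and that $V,V'\in L^1(\mathbb R_+)$, which follows from the $O(k^{-2})$ decay of $j$ (condition \ref{Condition IV}) together with the positivity of $T$ and of the $\alpha_j$, via the usual estimates on $K(x,y)$ and $\partial_xK(x,y)$. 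It then remains to show that the Weyl function of the operator just constructed equals the original $M$: one reconstructs the Jost solution from $\varphi$ and the spectral data, uses \ref{Condition II} to exclude a spurious zero of the new Jost function at the origin, uses \ref{Condition III} and the positivity of the $\alpha_j$ to guarantee that the reconstructed operator is the self-adjoint realization \eqref{domain of self adjoint operator} with eigenvalues exactly $\{\lambda_j\}$ and no others, and concludes by Theorem~\ref{Uniqueness}.

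\emph{Main obstacle.} The crux is this final identification in the surjectivity step: proving that conditions \ref{Condition I}--\ref{Condition VI} are \emph{sufficient}, i.e. that the $V$ produced by the Gelfand--Levitan machine really has $M$ for its Weyl function. Concretely one must run the contour-integral computation underlying \eqref{representation of psi prop 3} in reverse --- reassembling $\psi_\pm$, then $\phi_\pm$, and finally $M^\pm$ out of $\varphi$, $j$, the $\lambda_j$ and the $\alpha_j$ --- and check that the positivity hypotheses \ref{Condition I} and \ref{Condition III} exactly rule out spectral singularities and extra eigenvalues, while \ref{Condition II} and \ref{Condition IV} pin down the behaviour at $k=0$ and $k=\infty$ so that no information is lost. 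Everything else (well-definedness and injectivity) is a repackaging of results already established above.
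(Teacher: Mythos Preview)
Your proof is correct and follows the paper's argument essentially verbatim for well-definedness and injectivity: the same results (Theorem~\ref{Theorem poles of Weyl function}, Proposition~\ref{Connection between norming constants}, Lemma~\ref{proof 4}, Proposition~\ref{Relation between jump of the Weyl function with Jost function}, Lemma~\ref{Lemma asympt exp of Weyl function}, Proposition~\ref{Proposition Gelfand Levitan eq}, Theorem~\ref{Uniqueness}) are invoked for the same purposes.

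For surjectivity, you are actually more careful than the paper. The paper's proof simply constructs $V=-2\tfrac{d}{dx}K(x,x)$ from the Gelfand--Levitan kernel, notes that $\varphi(x,k)=\cos kx-\int_0^x K(x,y)\cos ky\,dy$ solves the boundary value problem with this $V$ and with $h=K(0,0)$, and then appeals to Condition~\ref{Condition VI} to conclude $V\in\mathbb V^1_{x_I}$ --- and stops. The step you flag as the ``main obstacle'', namely showing that the Weyl function of the reconstructed operator actually equals the original $M$, is not carried out in the paper; it is tacitly subsumed under the Gelfand--Levitan formalism (or under the reference to \cite[Theorem~2.2.5]{FreilingYurko} made immediately after the proof). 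Your outline of how that identification would be done --- reassembling $\psi_\pm$, $\phi_\pm$, $M^\pm$ from $\varphi$, $j$, $\lambda_j$, $\alpha_j$, and using the positivity and boundary conditions to exclude spurious spectrum --- is the right shape for a complete argument, and goes beyond what the paper writes down.
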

\begin{proof}
	We shall prove that, for fixed $h\in \mathbb{R}$ the map $\mathcal{J}_h$ is well-defined, that is, $\mathcal{J}_h (V)=M \in \mathbb{M}_{x_I}$ for any $V \in \mathbb{V}^1_{x_I}$.
	The Jost function $f_h(k)$ has a finite number of zeros in $\mathbb{C}_+$ and that they are all simple and pure imaginary (see \cite{levitaninverse, Sottile}). In \cite{Sottile} is proved that the Jost solution $f(x,k)$ and the Jost function $f_h(k)$ are entire in $k$, hence analytic for $\im k>0$ and continuous for $\Im k \geq 0$. Then, by definition of the Weyl function $M(\lambda)$, we can conclude that the Weyl function is analytic in $\Pi$, continuous in $\Pi_1$ except at the points where the denominator vanishes (see also Theorem \ref{Theorem poles of Weyl function}), which are the simple and pure imaginary zeros of the Jost function (see Theorem \ref{Theorem poles of Weyl function} and Remark \ref{Remark zeros of Weyl}). 
	In Proposition \ref{Connection between norming constants}, we proved that $\alpha_j >0$, hence  Condition \ref{Condition I} of the definition of the class of Weyl function is satisfied.
	
	In Lemma \ref{proof 4} we showed that $\frac{k}{f_h(k)} = O(1)$. Since $f(0,k)=1 + O(1/k)$, then $kM(\lambda)= O(1)$, which implies Condition \ref{Condition II}.
	
	Condition \ref{Condition III} is proved by Proposition \ref{Relation between jump of the Weyl function with Jost function} since for $\lambda>0$ we get $T(\lambda) = \frac{\sqrt{\lambda}}{\pi \left|f_h(k)\right|^2}>0$.
	By Lemma \ref{Lemma asympt exp of Weyl function} we can see that Condition \ref{Condition IV} is satisfied. 
	
	From Proposition \ref{Proposition Gelfand Levitan eq}, we see that Condition \ref{Condition VI} is satisfied, hence $M \in \mathbb{M}_{x_I}$ and $\mathcal{J}_h$ is well-defined.
	
	The injectivity of the map $\mathcal{J}_h$ is given by Theorem \ref{Uniqueness}. 
	
	To prove surjectivity,  we fix $M(\lambda) \in \mathbb{M}_{x_I}$ and we want to prove that there exists a $V\in \mathbb{V}^1_{x_I}$ such that $\mathcal{J}_h (V) = M(\lambda)$.
	Condition \ref{Condition I}--\ref{Condition IV} allow us to define a function $g(x,y)$\footnote{Condition \ref{Condition III} is needed because $T(\lambda)$ is the spectral measure and it must be non negative.} as in \eqref{formula for the gelfand kernel}  and $K(x,y)$ which satisfies the Gelfand--Levitan equation (see Proposition \ref{Proposition Gelfand Levitan eq}). From $K(x,y)$, solution of \eqref{Gelfand Levitan equation}, we can construct (as in \eqref{Remark 2 important formula})
	\begin{equation*}
	\varphi(x,k) = \cos kx - \int_{0}^{x} K(x,y) \cos (ky) dy
	\end{equation*}
	that is a solution to the boundary value problem \eqref{Schrod equation}--\eqref{robin bc} with $V(x) = -2 \frac{d}{dx} K(x,x)$ and $h = K(0,0)$ given.
	
	From Condition \ref{Condition VI} we know that the Gelfand--Levitan equation \eqref{Gelfand Levitan equation} has a unique solution $K(x,y)$, such that $V = - 2 \frac{d}{dx} K(x,x)$ is in the class $ \mathbb{V}^1_{x_I}$.
\end{proof}

The reader can compare Definition \ref{Class of Weyl function} and Theorem \ref{Theorem Inverse spectral Love problem} with the definition of the class $\textbf{W}$ and Theorem 2.2.5 in \cite{FreilingYurko}, which are obtained for a different class of potentials and through a different Gelfand--Levitan equation.

\begin{algorithm}
	Starting from a set of eigenvalues and resonances $\left\lbrace k_j \right\rbrace_{1}^{\infty}$ we can retrieve the potential $V_{\omega}(x)$ using the following algorithm:
	\begin{itemize}
		\item Construct the Jost function from 
		\begin{align*}
		f_h(k) =  f_h(0)  e^{ik} \lim_{R \to \infty} \prod_{|k_n|\leq R} \left(1 - \frac{k}{k_n}\right),
		\end{align*}
		where $f_h(0)$ is determined so that $f_h(k)= \ii k + O(1)$ as $k \to \infty$.
		\item From $\left\lbrace k_j \right\rbrace_{1}^{\infty}$ and $f_h(k)$ we construct the jump function $T(\lambda)$ and the normalizing constant $\alpha_k$ through formulas \eqref{Formula for the jump of Weyl function} and \eqref{formula spectral norming constant}:
		\begin{align*}
		&T(\lambda) = \frac{k}{\pi |f_h(k)|^2}, \\
		&\alpha_j =  4k_j^2 \left[ \frac{-i}{f_h(-k_j)\dot{f}_h(k_j)}\right].
		\end{align*}	
		\item Use the spectral data $\left( T(\lambda), \left\lbrace \alpha_j, \lambda_j \right\rbrace_{j=1, \,... \,, N} \right)$ to construct the Weyl function via  formula \eqref{representation of weyl function}
		\begin{equation*}
		M(\lambda) = \int^{\infty}_{0} \frac{T(\mu)}{\lambda - \mu} d\mu + \sum_{k=1}^{N} \frac{\alpha_k}{\lambda - \lambda_k}, \quad \lambda \in \Pi \backslash \Lambda'.
		\end{equation*}	
		\item Then construct $g(x,y)$ in \eqref{formula for the gelfand kernel} as in
		\begin{align*}
		&g(x,y) =
		\begin{cases}
		\frac{i}{\pi} \int_{-\infty}^{+\infty} k' \cos (k'x) j(k') \cos (k'y) dk' + \sum_{j=1}^N \cos(k_jx) \alpha_j \cos(k_jy), \qquad &x\geq y \\
		0, \qqq \qqq \qqq \qqq \qqq   &x<y
		\end{cases}.
		\end{align*}
		where $j(k):=M(\lambda) - \frac{1}{ik}$.
		
		\item  Solve the Gelfand--Levitan equation \eqref{Gelfand Levitan equation} with respect to $K(x,y)$,
		\begin{equation*}
		K(x,y) - g(x,y)+ \frac{1}{2} \int_{-x}^{x} K(x,s) g(s,y) ds=0.
		\end{equation*}
		\item  Obtain the potential from \eqref{formula potential V and K(x,x)}:
		\begin{equation*}
		V_{\omega}(x) = -2 \frac{d}{dx} K(x,x).
		\end{equation*}
	\item Obtain the shear modulus from
	\begin{equation}\label{Recovery of Lame parameter}
\hat{\mu}(x) = \frac{\hat{\mu}_I  \left( \omega^2_1 - \omega^2_2 \right)}{\omega^2_1 - \omega^2_2 - \hat{\mu}_I \left( V_{\omega_1}(x) - V_{\omega_2}(x) \right)}.
	\end{equation}
	\end{itemize}
\end{algorithm}
\begin{remark}\label{Remark shear modulus}
	The formula \eqref{Recovery of Lame parameter} for the reconstruction of the Lam\'e parameter $\hat{\mu}$ is obtained in \cite[Theorem 4.9]{Sottile}.
\end{remark}

\section*{Acknowledgements}

I want to thank my former supervisor Alexei Iantchenko for having introduced me to the topic of inverse resonance problems and for interesting discussions.

\bibliographystyle{amsplain}
\bibliography{MonographyCitations}

\end{document}